\tikzset{snake it/.style={decorate, decoration=snake}}
\definecolor{gray}{rgb}{0.25, 0.25, 0.25}
\newtheorem{theorem}{Theorem}[section]
\newtheorem{lemma}[theorem]{Lemma}
\newtheorem{fact}[theorem]{Fact}
\newtheorem{claim}[theorem]{Claim}
\newtheorem{observation}[theorem]{Observation}
\theoremstyle{definition}
\newtheorem{definition}[theorem]{Definition}
\title{Solution on strong partition of $2$-balanced regular multipartite tournaments}
\author{Jiangdong Ai\thanks{Corresponding author.  School of Mathematical Sciences and LPMC, Nankai University, Tianjin 300071, P.R.
China. Email: jd@nankai.edu.cn. },~ Fankang He\thanks{School of Mathematical Sciences and LPMC, Nankai University, Tianjin 300071, P.R.
China. Email: hefankang@mail.nankai.edu.cn.},~ Yihang Liu\thanks{School of Mathematical Sciences and LPMC, Nankai University, Tianjin 300071, P.R.
China. Email: liuyihang@mail.nankai.edu.cn.}}
\begin{document}

\maketitle
\begin{abstract}
We call a partition of a $c$-partite tournament into tournaments of order $c$ is strong if each tournament is strongly connected. The strong partition number denoted as $ST(r)$, represents the minimum integer $c'$ such that every regular $r$-balanced $c$-partite tournament has a strong partition with $c\geq c'$. 

Figueroa, Montellano-Ballesteros and Olsen showed the existence of $ST(r)$ for all $r\geq 2$ and proved that $5\leq ST(2)\leq 7$. In this note, we establish that $ST(2)=6$ and we also show the unique $2$-balanced $5$-partite tournament which has no strong partition.
\end{abstract}

\smallskip
\noindent{\bf Keywords:}
Multi-partite tournament; Strong partition; Control\\
\smallskip
\noindent{\bf AMS classification: 05C70, 05C20}

\section{Introduction}
Notation follows \cite{MR2472389}. Here, we provide only the definitions relevant to this note. A digraph $D$ is \emph{strong} if there is a dipath between every ordered vertex pair of $V(D)$. A \emph{strong-$k$-partition} of a digraph $D =(V, A)$ is a partition $(V_1,..., V_k)$ of $V$ into $k$ non-empty disjoint sets such that the induced digraphs of these vertex subsets $D[V_1],..., D[V_k]$ are strong. For $c\geq 3$, a \emph{$c$-partite tournament} is a digraph obtained from a complete $c$-partite graph by orienting each edge. Let $G$ be a $c$-partite tournament with partite sets $\{ V_i\}_{i=1}^{c}$. We say $G$ is \emph{$r$-balanced} if all the partite sets are of the same size $r$. We denote a $r$-balanced $c$-partite tournament $G$ by $G_{r,c}$. A partition of $G_{r,c}$ into maximal tournaments is a spanning subgraph of $G_{r,c}$ formed by $r$ pairwise vertex-disjoint tournaments of order $c$. In the rest of this paper, a partition of $G_{r,c}$ always means a partition of $G_{r,c}$ into maximal tournaments.

Being strong is a basic property in digraphs. Partitioning a digraph into exactly $k$ strong subdigraphs has been extensively studied. Bang-Jensen, Cohen and Havet \cite{BangJensen2016partiton} proved that determining whether a digraph $D$ can be partitioned into $k$ strong subgraphs is $\mathcal{NP}$-hard. Reid \cite{Reid1985partition} proved that any $2$-strong $n$-vertex tournament with $n\geq 8$ can be partitioned into two strong subtournaments. Now we pay attention to partitioning a regular balanced multipartite tournament, instead of tournaments, into strong maximal tournaments. 
This problem also comes from Volkmann \cite{Volkmann2007}: How close to regular must a c-partite tournament be to guarantee a strong $c$-vertex tournament? And in a subsequent paper, Volkmann and Winzen \cite{Volkmann2008} conjectured that each vertex of a regular $c$-partite tournament with $c \geq 5$ partite sets is contained in a strong tournament of order $c$. This conjecture is proved by G. Xu, S. Li, H. Li and Q. Guo \cite{Xu2011} for the case when $c \geq 16$. Partitioning a regular balanced $c$-partite tournament into strong maximal tournaments is a stronger question, compared with the former conjecture, of whether this $c$-partite tournament has many disjoint strong $c$-vertex tournaments.

A partition of $G_{r,c}$ is called a \emph{strong partition} (st-partition for short) if every maximal tournament of this partition is strong, otherwise it is called a \emph{non-strong partition}. The \emph{st-partition number}, denoted by $ST(r)$, is the minimum integer $c'$ such that every $G_{r,c}$ has a st-partition if $c \geq c'$. This function was first defined by Figueroa, Montellano-Ballesteros and Olsen \cite{figueroa2023partition}, and they showed this function is well defined, i.e. such $c'$ exists for any integer $r\geq 2$. Furthermore, they find a rough range for $ST(2)$.

\begin{theorem}[\cite{figueroa2023partition}]\label{st27}
    $5 \leq ST(2) \leq 7$.
\end{theorem}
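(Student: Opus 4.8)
The statement splits into the lower bound $ST(2)\ge 5$ and the upper bound $ST(2)\le 7$, proved by quite different means; the lower bound is the easier half. For it, it is enough to exhibit a single regular $2$-balanced $c$-partite tournament with no strong partition for some $c\ge 4$, and $c=4$ suffices. The plan is to write down an explicit regular $4$-partite tournament $H$ on $8$ vertices --- two vertices per part, each of out-degree $3$ --- by listing the orientation of every arc, and then to go through its $8$ partitions into two tournaments of order $4$, showing each leaves a non-strong part. This is short once one observes that a tournament on four vertices is strong if and only if it has no source (out-degree $3$) and no sink (in-degree $3$): in each partition one just points at a source or a sink inside $T_1$ or inside $T_2$. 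The construction should be engineered so that a small set of ``extremal'' vertices --- each dominating, or being dominated by, a full transversal of the other three parts --- sits in the parts in a coordinated way, so that whichever of $T_1,T_2$ receives such a vertex is forced to contain a source or a sink; one then checks that regularity can still be met (in particular on the twelve arcs running between $T_1$ and $T_2$).

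For the upper bound, let $G=G_{2,c}$ be regular with $c\ge 7$; we must produce a strong partition. Suppose for contradiction that every partition of $G$ is non-strong, and among all partitions fix one, $(T_1,T_2)$, that minimizes a suitable potential --- say the number of strong components of $T_1$ plus that of $T_2$ (a partition is strong exactly when this potential equals $2$). Assume $T_1$ is non-strong, with initial strong component $S$ (which completely dominates $V(T_1)\setminus S$) and terminal strong component $F$ (completely dominated by $V(T_1)\setminus F$); analyse $T_2$ dually if it too is non-strong. Now bring in regularity: each vertex $w\in S$ has out-degree exactly $c-1$ in $G$, of which $|V(T_1)\setminus S|$ are already spent dominating $V(T_1)\setminus S$, leaving it at most $|S|-1$ out-arcs into $T_2$; symmetrically for $F$. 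This essentially pins down how the partner $w'$ of a vertex $w\in S$ (the second vertex of $w$'s part, currently in $T_2$) interacts with $S$ and with $F$, and this is where the paper's notion of \emph{control} enters: it records which vertices of one part, when exchanged with their partners, would or would not destroy a given domination. The goal is then to show that when $2c$ is large enough --- precisely $c\ge 7$ --- the degree slack always allows swapping some $w$ with its partner $w'$ (possibly after first readjusting a bounded part of the partition) so that the potential strictly drops, contradicting minimality; hence no regular $G_{2,c}$ with $c\ge 7$ is bad, i.e.\ $ST(2)\le 7$.

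The main obstacle is the upper bound, and inside it the swap step. One has to escape the ``king-swap'' trap: if $T_1$ is non-strong only because some vertex $v$ is a source (so $S=\{v\}$), then regularity forces $v$ to be dominated by every vertex of $T_2$ other than its partner, so moving $v$ to $T_2$ merely turns it into a sink of $T_2$ and the defect reappears. The crux is to argue, from the regularity count, that for $c\ge 7$ there is always an admissible swap --- possibly involving two parts at once --- that does not recreate a source or a sink on the other side, while for $c\le 6$ this room can disappear (which is exactly why the bound cannot be lowered for free, and why the sharper value $ST(2)=6$ established in this note needs real extra work). I would organize the argument as a case analysis on the sizes of the initial and terminal strong components of the offending part(s), treating the ``single source or sink'' case separately from the case of a larger $S$--$F$ separation.
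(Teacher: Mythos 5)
First, note that the paper does not prove this statement at all: Theorem~\ref{st27} is quoted from Figueroa, Montellano-Ballesteros and Olsen \cite{figueroa2023partition}, so the only ``paper proof'' to compare against is a citation. Judged on its own, your proposal is a plan rather than a proof, and both halves have genuine gaps. For the lower bound you correctly reduce $ST(2)\ge 5$ to exhibiting one regular $G_{2,4}$ with no st-partition and checking its $(2!)^{3}=8$ partitions via the source/sink criterion, but you never produce the tournament: ``the construction should be engineered so that\dots'' is exactly the content that is missing. Such an example does exist --- in fact the digraph $G_1$ of Figure~\ref{suitableg42} in this note is one: it is regular, and in each of its $8$ partitions one of the two order-$4$ tournaments has a source or a sink --- so this half is easily completable, but as written it is not complete.

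The upper bound is where the real gap lies. Your entire argument rests on the assertion that, for a minimal-potential partition of a regular $G_{2,c}$ with $c\ge 7$, ``the degree slack always allows swapping some $w$ with its partner $w'$ (possibly after first readjusting a bounded part of the partition) so that the potential strictly drops.'' That is precisely the theorem, restated; no counting is carried out, no case analysis is given, and nothing explains why the threshold is $7$ rather than $6$ or $8$. Moreover the swap framework itself is delicate: exchanging $w$ and $w'$ changes both tournaments simultaneously, and beyond the one ``king-swap'' obstruction you mention, a swap that repairs the defect in $T_1$ can create a new source or sink in $T_2$ or merge/split strong components in uncontrolled ways; ``readjusting a bounded part of the partition'' is not an argument that this can always be avoided. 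The known proof of $ST(2)\le 7$ (and the method this note builds on in Section~2) is a counting argument, not an extremal swap: there are $(r!)^{c-1}$ partitions (Lemma~\ref{number}), a non-strong tournament of order $c$ forces some vertex of minimum degree at most $\lfloor (c-2)/4\rfloor$ (Lemma~\ref{folk}), and one bounds, vertex by vertex, the number of partitions in which that vertex can be so deficient; for $r=2$ and $c\ge 7$ the total falls short of $2^{c-1}$, so a strong partition must exist. If you want to salvage your route, you would need to supply, with full case analysis on the initial and terminal strong components, a proof that an admissible potential-decreasing exchange always exists when $c\ge 7$ --- and that is the entire difficulty, not a detail.
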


In this note, we determine the exact value of $ST(2)$.

\begin{theorem}\label{mainthm}
    $ST(2) = 6$.
\end{theorem}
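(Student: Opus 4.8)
The statement splits into the two inequalities $ST(2)\le 6$ and $ST(2)\ge 6$. Since Theorem~\ref{st27} already gives $ST(2)\le 7$, i.e.\ every $G_{2,c}$ with $c\ge 7$ has an st-partition, the upper bound reduces to the single assertion that every regular $2$-balanced $6$-partite tournament has an st-partition. The lower bound amounts to exhibiting one regular $2$-balanced $5$-partite tournament with no st-partition; and for the stronger uniqueness claim advertised in the abstract, to showing it is the only such $G_{2,5}$ up to isomorphism.

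For the lower bound I would first write down the candidate $G_{2,5}$ explicitly: label the parts $V_1,\dots,V_5$ with $V_i=\{u_i,w_i\}$ and orient the between-part arcs by a rotational rule on the index in $\mathbb{Z}_5$ combined with a fixed rule on the $\{u,w\}$ coordinate, chosen so that every vertex has out-degree and in-degree $4$ (each vertex is non-adjacent only to its partner). Then two things must be checked. First, this $G_{2,5}$ has no st-partition: a partition of a $G_{2,5}$ is just a choice of a transversal $T_1$ and its complement $T_2$, so there are only $2^5/2=16$ partitions, and I would rule them out using the ``control'' language — a tournament on $5$ vertices is non-strong iff some proper nonempty set dominates its complement, which forces one side to have size $1$ or $2$, so it suffices to show that for every transversal, either $T_1$ or $T_2$ contains a vertex dominating the other four, or a dominating pair. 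Second, and this is the harder direction, uniqueness: starting from an arbitrary $G_{2,5}$ with no st-partition, use $4$-regularity together with the $16$-partition bookkeeping to successively pin down all between-part and within-part orientations and force an isomorphism with the candidate.

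For the upper bound, suppose for contradiction that some $G_{2,6}$ has only non-strong partitions. Write $V_i=\{a_i,b_i\}$; a partition is a transversal $T_1$ with complement $T_2$, each a tournament on $6$ vertices, and every vertex of $G$ has out-degree and in-degree $5$. For a non-strong $6$-vertex tournament there is a split $V(T)=X\sqcup Y$ with all arcs between $X$ and $Y$ going from $X$ to $Y$ and $\min\{|X|,|Y|\}\le 3$; I would organize the argument by the value of this minimum, over both tournaments of a partition and over all $32$ partitions. In the size-$1$ case a chosen vertex $v$ dominates the other five chosen vertices, and since $v$ has exactly $5$ out-neighbours in $G$ this forces $T_1\setminus\{v\}=N^+(v)$, hence $N^+(v)$ must be a transversal, so $v$ ``splits'' every other part; applying the same reasoning to sinks and to $T_2$, and then switching a single part to migrate to a neighbouring partition, one extracts rigid local structure. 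The size-$2$ and size-$3$ cases are treated by counting the arcs across the split, using $2$-regularity of the interaction between $X$, $Y$ and the un-chosen vertices, and invoking Moon's theorem (a tournament is strong iff it is Hamiltonian) to build an explicit Hamilton cycle in a modified partition.

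The step I expect to be the main obstacle is the balanced case $|X|=|Y|=3$ of the upper bound: there the arc counts are tight, one cannot derive a contradiction by flipping a single part, and the contradiction must instead be produced by combining two or three overlapping partitions simultaneously. This, together with the uniqueness half of the $5$-partite analysis, is where essentially all of the case work concentrates; the remaining cases should fall out of degree counting plus local switching arguments.
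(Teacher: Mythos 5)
There is a genuine gap: what you have written is a plan, not a proof, and the two places where you yourself locate ``essentially all of the case work'' are exactly the places left unresolved. For the lower bound you never fix the rotational construction nor verify any of the $16$ partitions, and the uniqueness half is reduced to the hope that ``$4$-regularity together with the $16$-partition bookkeeping'' pins down all orientations --- no mechanism for that bookkeeping is given. For the upper bound, your direct attack on $G_{2,6}$ (classify the dominant split $X\sqcup Y$ of a non-strong $6$-vertex tournament and argue over the $32$ partitions) stalls precisely at the balanced case $|X|=|Y|=3$, where you only say the contradiction ``must be produced by combining two or three overlapping partitions simultaneously'' without exhibiting such a combination; since that is the case in which single-part switching provably gives nothing, the proof of $ST(2)\le 6$ is simply not there. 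In a problem of this kind the whole content is the finite case analysis, so leaving these cases open is not a presentational shortfall but a missing proof.

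It is also worth noting that the paper closes the upper bound by a quite different route, which avoids ever analysing non-strong $6$-vertex tournaments. It first shows (Lemma~\ref{subG25}) that a counterexample $G_{2,6}$ must contain, as the deletion of one part, a nearly regular $G_{2,5}$ with no st-partition; it then controls such a $G_{2,5}$ by counting functions $n(\cdot)$ and a refined $n'(\cdot)$ attached to partite sets (Lemmas~\ref{lemma rn4}, \ref{be controlled}, \ref{good partition}, \ref{control3} and Claims~\ref{regularclaim}--\ref{lastlemma}), forcing the rigid structure of Theorem~\ref{st 5}: four parts pairwise in relation control\ding{172} and one part controlled\ding{173} by all of them, i.e.\ the unique exceptional tournament $H_0$. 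Finally it exhibits an explicit st-partition of any $G_{2,6}$ extending $H_0$, a contradiction. Note that the uniqueness statement you treat as an optional extra for the abstract is in fact load-bearing in the paper's upper-bound proof; your proposal has no substitute for it. If you want to salvage your direct approach, you would need to actually carry out the $3+3$ analysis (or import something like the paper's reduction-to-$G_{2,5}$ step), and for the lower bound you should write down the example concretely and check the $16$ partitions, which is finite and short once the ``control'' relations are recorded.
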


\section{Preliminaries}

A $2$-balanced $5$-partite tournament $G_{2,5}$ is called \emph{nearly regular} if for any $v \in V(G_{2,5})$ we have $|d^{+}(v) - d^{-}(v)| \leq 2$. In this paper, all the $2$-balanced $5$-partite tournaments we considered are nearly regular, without special statements. In this section, we explore the structure of such $2$-balanced $5$-partite tournaments. Furthermore, we give an intuitive idea to construct a regular $2$-balanced $5$-partite tournament $G_{2,5}$ with no st-partition, which implies that $ST(2)\geq 6$.

\begin{definition}[control\ding{172}]
    Let $G_{2,c}$ be a regular $2$-balanced $c$-partite tournament with partite sets $V_{i} = \{v_{i,1}, v_{i,2}\}, i \in [c]$. For two distinct partite sets $V_i$ and $V_j$, We say that $V_i$ \emph{control\ding{172}} $V_j$, denoted by $V_i \stackrel{1}{\to} V_j$, if there are exactly two arcs from $V_i$ to $V_j$ and $|N^{+}(v_{i,1}) \cap V_j| = |N^{+}(v_{i,2}) \cap V_j| = |N^{+}(v_{j,1}) \cap V_i| = |N^{+}(v_{j,2}) \cap V_i| = 1$.
\end{definition}

\begin{definition}[control\ding{173}]
    Let $G_{2,c}$ be a regular $2$-balanced $c$-partite tournament with partite sets $V_{i} = \{v_{i,1}, v_{i,2}\}, i \in [c]$. For two distinct partite sets $V_i$ and $V_j$, we say that $V_i$ \emph{control\ding{173}} $V_j$, denoted by $V_i \stackrel{2}{\to} V_j$, if there are exactly two arcs from $V_i$ to $V_j$ and $|N^{+}(v_{j,1}) \cap V_i| \neq 1, |N^{+}(v_{j,2}) \cap V_i| \neq 1$.
\end{definition}

\begin{definition}[control\ding{174}, control\ding{175}]
    Let $G_{2,c}$ be a regular 2-balanced $c$-partite tournament with partite sets $V_{i} = \{v_{i,1}, v_{i,2}\}, i \in [c]$. For two distinct partite sets $V_i$ and $V_j$, we say that $V_i$ \emph{control\ding{174}} $V_j$, denoted by $V_i \stackrel{3}{\to} V_j$, if there are exactly three arcs from $V_i$ to $V_j$. We say that $V_i$ \emph{control\ding{175}} $V_j$, denoted by $V_i \stackrel{4}{\to} V_j$, if there are exactly four arcs from $V_i$ to $V_j$.
\end{definition}

We want to emphasize that throughout the entirety of the text, we may repeatedly substitute 'control' for 'controls', disregarding grammatical correctness. 

The above four relationships between two partite sets of $G_{2,c}$ are shown in Figure \ref{relation}. We say a vertex $u \in V$ is \emph{good to $V_i$} if $u \to v_{i,1}$ and $v_{i,2} \to u$ or $u \to v_{i,2}$ and $v_{i,1} \to u$. And $V_{i}$ is \emph{good to $V_{j}$}, if both $v_{i,1}$ and $v_{i,2}$ are good to $V_j$. Note that if $V_i$ is good to $V_j$ then $V_i \stackrel{1}{\to} V_j$ or $V_i \stackrel{2}{\to} V_j$. Let $v$ be a vertex in $D$, the \emph{minimum degree} of $v$ is $\delta(v) := \min \{d^{+}(v), d^{-}(v)\}$ and the \emph{minimum degree} of $D$ is $\delta(D) := \min_{v\in V(D)} \{\delta(v)\}$. A vertex $v$ is \emph{regular} if $d^{+}(v) = d^{-}(v)$, otherwise we say $v$ is \emph{irregular}. Note that if $V_i$ is good to $V_j$, then $V_i \stackrel{1}{\to} V_j$ or $V_i \stackrel{2}{\to} V_j$.

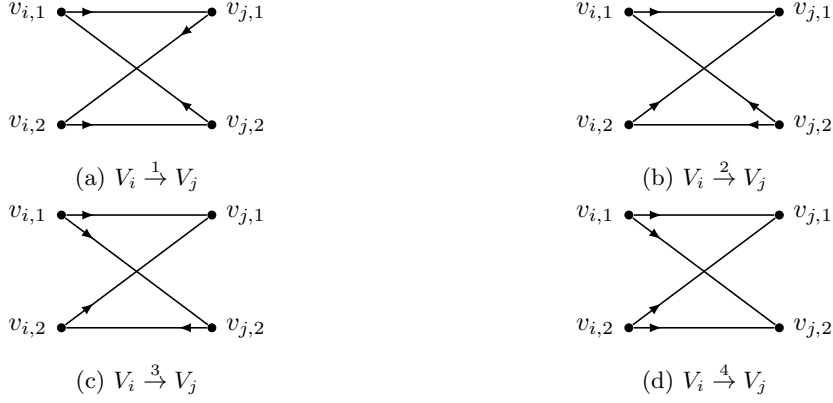
\begin{figure}[h]
    \centering
    \begin{subfigure}[t]{.45\textwidth}
        \centering
        \begin{tikzpicture}
        [
        corner/.style={ 
            circle,
            fill,   
            inner sep=1.2pt},	
        arrowline/.style={
            line width=0.6pt,
            postaction = decorate,
            decoration = {markings,
                mark = at position .2 with \arrow{latex}
            }
        },
        ]

        \node[corner,
        label = {left:$v_{i,1}$}] (v_{i,1}) at (-1,0){} ;
        
        \node[corner,
        label = {right:$v_{j,2}$}] (v_{j,2}) at (1,-1.5){} ;
        
        \node[corner,
        label = {left:$v_{i,2}$}] (v_{i,2}) at (-1,-1.5){} ;

        \node[corner,
        label = {right:$v_{j,1}$}] (v_{j,1}) at (1,0){} ;

        \draw [arrowline] (v_{j,2}) to (v_{i,1});
        \draw [arrowline] (v_{i,1}) to (v_{j,1});

        \draw [arrowline] (v_{i,2}) to (v_{j,2});
        \draw [arrowline] (v_{j,1}) to (v_{i,2});	
    \end{tikzpicture}
    \caption{$V_i \stackrel{1}{\to} V_j$}
    \end{subfigure}
    \begin{subfigure}[t]{.45\textwidth}
        \centering
        \begin{tikzpicture}
        [
        corner/.style={ 
            circle,
            fill,   
            inner sep=1.2pt},	
        arrowline/.style={
            line width=0.6pt,
            postaction = decorate,
            decoration = {markings,
                mark = at position .2 with \arrow{latex}
            }
        },
        ]

        \node[corner,
        label = {left:$v_{i,1}$}] (v_{1,1}) at (-1,0){} ;
        
        \node[corner,
        label = {right:$v_{j,2}$}] (v_{2,2}) at (1,-1.5){} ;
        
        \node[corner,
        label = {left:$v_{i,2}$}] (v_{1,2}) at (-1,-1.5){} ;

        \node[corner,
        label = {right:$v_{j,1}$}] (v_{2,1}) at (1,0){} ;

        \draw [arrowline] (v_{2,2}) to (v_{1,1});
        \draw [arrowline] (v_{1,1}) to (v_{2,1});

        \draw [arrowline] (v_{2,2}) to (v_{1,2});
        \draw [arrowline] (v_{1,2}) to (v_{2,1});	
    \end{tikzpicture}
    \caption{$V_i \stackrel{2}{\to} V_j$}
    \end{subfigure}
    \begin{subfigure}[t]{.45\textwidth}
        \centering
        \begin{tikzpicture}
        [
        corner/.style={ 
            circle,
            fill,   
            inner sep=1.2pt},	
        arrowline/.style={
            line width=0.6pt,
            postaction = decorate,
            decoration = {markings,
                mark = at position .2 with \arrow{latex}
            }
        },
        ]

        \node[corner,
        label = {left:$v_{i,1}$}] (v_{1,1}) at (-1,0){} ;
        
        \node[corner,
        label = {right:$v_{j,2}$}] (v_{2,2}) at (1,-1.5){} ;
        
        \node[corner,
        label = {left:$v_{i,2}$}] (v_{1,2}) at (-1,-1.5){} ;

        \node[corner,
        label = {right:$v_{j,1}$}] (v_{2,1}) at (1,0){} ;

        \draw [arrowline] (v_{1,1}) to (v_{2,2});
        \draw [arrowline] (v_{1,1}) to (v_{2,1});

        \draw [arrowline] (v_{2,2}) to (v_{1,2});
        \draw [arrowline] (v_{1,2}) to (v_{2,1});	
    \end{tikzpicture}
    \caption{$V_i \stackrel{3}{\to} V_j$}
    \end{subfigure}
    \begin{subfigure}[t]{.45\textwidth}
        \centering
        \begin{tikzpicture}
        [
        corner/.style={ 
            circle,
            fill,   
            inner sep=1.2pt},	
        arrowline/.style={
            line width=0.6pt,
            postaction = decorate,
            decoration = {markings,
                mark = at position .2 with \arrow{latex}
            }
        },
        ]

        \node[corner,
        label = {left:$v_{i,1}$}] (v_{i,1}) at (-1,0){} ;
        
        \node[corner,
        label = {right:$v_{j,2}$}] (v_{j,2}) at (1,-1.5){} ;
        
        \node[corner,
        label = {left:$v_{i,2}$}] (v_{i,2}) at (-1,-1.5){} ;

        \node[corner,
        label = {right:$v_{j,1}$}] (v_{j,1}) at (1,0){} ;

        \draw [arrowline] (v_{i,1}) to (v_{j,2});
        \draw [arrowline] (v_{i,1}) to (v_{j,1});

        \draw [arrowline] (v_{i,2}) to (v_{j,2});
        \draw [arrowline] (v_{i,2}) to (v_{j,1});	
    \end{tikzpicture}
    \caption{$V_i \stackrel{4}{\to} V_j$}
    \end{subfigure}
    \caption{Four relationships between two partite sets}
    \label{relation}
\end{figure}

\begin{lemma}[Folklore]\label{folk}
    Let T be a tournament of order c. If T is non-strong then $\delta(T)\leq \left \lfloor \frac{c-2}{4}\right \rfloor$.
\end{lemma}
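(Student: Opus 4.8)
The plan is to use the standard structural description of a non-strong tournament. Since $T$ is non-strong, the acyclic condensation on its strongly connected components is a nonempty, non-trivial transitive tournament, so there is a partition $V(T)=A\cup B$ into nonempty sets with every arc between $A$ and $B$ oriented from $A$ to $B$. Set $a=|A|$ and $b=|B|$, so $a+b=c$ and $a,b\ge 1$. Everything then reduces to locating, inside each side, a single vertex whose minimum degree is small, and then balancing the two resulting bounds.

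First I would handle the side $A$. Each $v\in A$ dominates all of $B$, so $v$ has no in-neighbour outside $A$ and hence $d_T^-(v)=d_{T[A]}^-(v)$; in particular $\delta(v)\le d_{T[A]}^-(v)$. Because $T[A]$ is a tournament on $a$ vertices, the in-degrees inside $T[A]$ sum to $\binom{a}{2}$, so some $v\in A$ has $d_{T[A]}^-(v)\le\lfloor (a-1)/2\rfloor$, giving $\delta(T)\le\lfloor (a-1)/2\rfloor$. The side $B$ is symmetric: every $u\in B$ is dominated by all of $A$, so $d_T^+(u)=d_{T[B]}^+(u)$ and $\delta(u)\le d_{T[B]}^+(u)$, and choosing the vertex of minimum out-degree in the tournament $T[B]$ yields $\delta(T)\le\lfloor (b-1)/2\rfloor$.

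It remains to combine the two estimates. Assuming without loss of generality $a\le b$, we get $\delta(T)\le\lfloor (a-1)/2\rfloor$, and since $(a-1)+(b-1)=c-2$ we have $a-1\le\lfloor (c-2)/2\rfloor$, so $\delta(T)\le\lfloor\,\lfloor (c-2)/2\rfloor/2\,\rfloor=\lfloor (c-2)/4\rfloor$ by the nested-floor identity $\lfloor\lfloor x/2\rfloor/2\rfloor=\lfloor x/4\rfloor$ valid for integers $x\ge 0$.

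I do not expect a genuine obstacle here: the only points requiring care are the floor arithmetic and the degenerate cases $a=1$ or $b=1$, where the bound $\lfloor(a-1)/2\rfloor=0$ (resp. $\lfloor(b-1)/2\rfloor=0$) is simply realised by the source (resp. sink) vertex of the dominant partition, and $\lfloor(c-2)/4\rfloor\ge 0$ since $c\ge 2$. The substantive ingredient is the structural fact that the condensation of a tournament is transitive, which is itself classical.
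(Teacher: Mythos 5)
Your proof is correct: the dominance partition $V(T)=A\cup B$ coming from the condensation, the averaging bound $\delta(T)\le\lfloor (a-1)/2\rfloor$ (and symmetrically $\lfloor (b-1)/2\rfloor$), and the final step $\min(a,b)-1\le\lfloor (c-2)/2\rfloor$ combined with $\lfloor\lfloor x/2\rfloor/2\rfloor=\lfloor x/4\rfloor$ yield exactly the stated inequality, with the degenerate cases $a=1$ or $b=1$ handled as you note. The paper cites this lemma as folklore and gives no proof of its own, so there is nothing to compare against; your argument is the standard one and is complete.
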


Note that for a tournament $T$ on $5$ vertices, $T$ is non-strong if and only if $\delta(T) = 0$.

\begin{lemma}[\cite{figueroa2023partition}]\label{number}
    The number of partitions of any $G_{r,c}$ into maximal tournaments is $(r!)^{c-1}$
\end{lemma}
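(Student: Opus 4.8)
The plan is to translate the count into a count of systems of bijections between the partite sets, after first pinning down what a maximal tournament in such a partition looks like.

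\emph{Step 1: a maximal tournament is a transversal.} I would begin by noting that if $T$ is a tournament of order $c$ sitting inside a $c$-partite tournament with parts $V_1,\dots,V_c$, then $T$ meets each $V_i$ in at most one vertex, since two vertices in a common part are non-adjacent; as $|V(T)|=c$, it meets each $V_i$ in exactly one vertex. Hence a partition of $G_{r,c}$ into maximal tournaments is nothing but an unordered partition of $V(G_{r,c})$ into $r$ sets, each of which contains exactly one vertex from every partite set (a transversal). Conversely, for any such partition of the vertex set into $r$ transversals, each part induces a tournament of order $c$, so it is a partition into maximal tournaments; and the resulting spanning subgraph determines, and is determined by, this vertex partition. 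So it suffices to count partitions of $V(G_{r,c})$ into $r$ transversals.

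\emph{Step 2: break the relabeling symmetry.} Write $V_1=\{v_{1,1},\dots,v_{1,r}\}$. In any partition into transversals, exactly one part contains $v_{1,k}$; call it $S_k$. Since the parts are pairwise disjoint and nonempty they are distinct, so $(S_1,\dots,S_r)$ is a canonically ordered list. The partition is then completely encoded by the data, for each $i\in\{2,\dots,c\}$, of the map $\sigma_i\colon[r]\to[r]$ defined by $v_{i,\sigma_i(k)}\in S_k$; this $\sigma_i$ is a bijection because the $S_k$ partition $V_i$ with each meeting $V_i$ in exactly one vertex.

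\emph{Step 3: count and conclude.} Conversely, any choice of permutations $\sigma_2,\dots,\sigma_c$ of $[r]$ yields a partition into transversals via $S_k:=\{v_{1,k}\}\cup\{v_{i,\sigma_i(k)}:2\le i\le c\}$, and distinct tuples give distinct partitions. Thus the number of partitions of $G_{r,c}$ into maximal tournaments equals the number of $(c-1)$-tuples of permutations of $[r]$, namely $(r!)^{c-1}$. The argument is entirely elementary; the only places that deserve care are the observation in Step 1 that order $c$ forces each maximal tournament to be a transversal, and the fact that a partition is an \emph{unordered} object (recorded as a spanning subgraph), so one must anchor the bijections at a fixed partite set $V_1$ rather than naively multiplying $r!$ over all $c$ parts, which would overcount by a factor of $r!$.
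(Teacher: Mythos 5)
Your proof is correct: maximal tournaments in such a partition are exactly transversals of the partite sets, anchoring the $r$ parts by their vertex in $V_1$ removes the unordered-partition ambiguity, and the resulting bijection with $(c-1)$-tuples of permutations of $[r]$ gives $(r!)^{c-1}$. The paper itself states this lemma as a citation to Figueroa, Montellano-Ballesteros and Olsen without reproducing a proof, and your argument is the standard one that establishes it, with the one subtle point (avoiding the extra factor of $r!$ from ordering the parts) handled correctly.
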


Given a nearly regular $2$-balanced $5$-partite tournament $G_{2,5}$ with partite sets $V_i = \{v_{i,1}, v_{i,2} \}$ for $i=[5]$. Let $N_1$ be the number of st-partitions of $G_{2,5}$. Let $N_2$ be the number of non-strong partitions of $G_{2,5}$. By Lemma \ref{number}, the number of partitions of $G_{2,5}$ into maximal tournaments is $16$, i.e. $N_1 + N_2 = 16$. Choose a vertex $u \in V(G_{2,5})$ and a partition $\tau$, let $T_{u,\tau}$ denote the maximal tournament containing $u$ under partition $\tau$. Let $n(u)$ be the number of partitions $\tau$ such that $\delta_{T_{u,\tau}}(u) = 0$. Then $n(u) \leq 2$. 
Let $n(V_i)$ be the number of non-strong partitions $\tau$ such that there exists a vertex $u \in V_{i}$ with $\delta_{T_{u,\tau}}(u) = 0$. Then $n(V_i) \leq n(v_{i,1}) + n(v_{i,2}) \leq 4$. Note that $N_2 \leq \sum_{i=1}^{5} n(V_i)$ since for each non-strong partition $\tau$ of $G_{2,5}$, there exists a maximal tournament $T$ under partition $\tau$ such that $T$ is non-strong, i.e. there is a vertex $u \in V(T)$ with $\delta_T(u) = 0$. If $\sum_{i=1}^{5} n(V_i) < 16$, then $G_{2,5}$ has a st-partition. The idea to construct a regular $5$-partite tournament $G_{2,5}$ with no st-partition is to make $\sum_{i=1}^{5} n(V_i)$ as large as possible. 

We say $V_i$ is \emph{regular} if both $v_{i,1}$ and $v_{i,2}$ are regular in $G_{2,5}$; $V_i$ is \emph{semi-regular} if there is exactly one regular vertex $u \in V_i$ in $G_{2,5}$; $V_i$ is \emph{irregular} if both $v_{i,1}$ and $v_{i,2}$ are irregular in $G_{2,5}$. We have the following useful observations.

\begin{observation}
    Let $G_{2,5}$ be a nearly regular $2$-balanced $5$-tournament. For a  vertex $u \in V_i$, $n(u) \in \{0, 2\}$. In particular, if $u$ is regular, $n(u) = 2$ if and only if $u$ is good to any other partite sets. Moreover, if $u$ is irregular, $n(u)=2$ if and only if $u$ is good to the other three partite sets.
\end{observation}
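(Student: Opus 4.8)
The plan is to rewrite $n(u)$ as a sum of two products of local degrees and then read everything off from near-regularity.

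First I would record that a partition is recovered from the tournament through $u$. Fix $u\in V_i$. A partition of $G_{2,5}$ into maximal tournaments is a pair of vertex-disjoint tournaments of order $5$; the one containing $u$, namely $T_{u,\tau}$, is obtained by choosing, for each $j\ne i$, exactly one vertex $w_j\in V_j$ to place with $u$, the remaining five vertices automatically forming the other tournament. Conversely every choice $(w_j)_{j\ne i}$ arises this way, so partitions correspond bijectively to the $2^4=16$ such choices (consistent with Lemma~\ref{number}). Since whether $\delta_{T_{u,\tau}}(u)=0$ depends only on the arcs between $u$ and the $w_j$, I may treat $n(u)$ as the number of choices $(w_j)_{j\ne i}$ with $\delta_{T_{u,\tau}}(u)=0$.

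Next I would set up the product formula. Because $T_{u,\tau}$ has five vertices, $\delta_{T_{u,\tau}}(u)=0$ holds exactly when $u$ dominates all four $w_j$, or $u$ is dominated by all four $w_j$; these two events are disjoint since no vertex is simultaneously an out- and an in-neighbour of $u$. For each $j$ there are $|N^{+}(u)\cap V_j|$ admissible choices of $w_j$ in the first event and $|N^{-}(u)\cap V_j|$ in the second, so
\[
  n(u)=\prod_{j\ne i}|N^{+}(u)\cap V_j|\;+\;\prod_{j\ne i}|N^{-}(u)\cap V_j|.
\]
Then I would feed in near-regularity: every vertex of $G_{2,5}$ has degree $8$, so $(d^{+}(u),d^{-}(u))\in\{(3,5),(4,4),(5,3)\}$, the four numbers $a_j:=|N^{+}(u)\cap V_j|$ lie in $\{0,1,2\}$ with $|N^{-}(u)\cap V_j|=2-a_j$, and $\sum_{j\ne i}a_j=d^{+}(u)$. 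The only arithmetic fact needed is: four numbers in $\{0,1,2\}$ summing to $s$ have nonzero product iff all are $\ge 1$, in which case $s=4$ forces them all to equal $1$ (product $1$), $s=5$ forces the multiset $\{1,1,1,2\}$ (product $2$), and $s=3$ is impossible.

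Finally I would split into cases. If $u$ is regular, both sums equal $4$: either some $a_j\ne 1$, so both products vanish and $n(u)=0$; or all $a_j=1$, equivalently $|N^{-}(u)\cap V_j|=1$ for all $j$, equivalently $u$ is good to every other partite set, so both products equal $1$ and $n(u)=2$. If $u$ is irregular, say $d^{+}(u)=5$ (the case $d^{+}(u)=3$ is symmetric, interchanging $N^{+}$ and $N^{-}$), then the in-product vanishes since the in-sum is $3<4$, so $n(u)=\prod_{j\ne i}a_j\in\{0,2\}$, with value $2$ iff $(a_j)_{j\ne i}$ is a permutation of $(1,1,1,2)$, i.e.\ iff $u$ is good to exactly three of the other four partite sets (and dominates both vertices of the fourth). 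This gives all three assertions. The only place that needs genuine care is the first step — that a partition is recovered from the $5$-set $T_{u,\tau}$, hence from $(w_j)_{j\ne i}$ — together with the disjointness used in the product formula; after that the proof is a one-line count plus the arithmetic above, and I foresee no real obstacle.
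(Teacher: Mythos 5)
Your proof is correct: the bijection between partitions and the choices $(w_j)_{j\ne i}$, the resulting formula $n(u)=\prod_{j\ne i}|N^{+}(u)\cap V_j|+\prod_{j\ne i}|N^{-}(u)\cap V_j|$, and the arithmetic with the four values $a_j\in\{0,1,2\}$ summing to $d^{+}(u)\in\{3,4,5\}$ together give all three assertions. The paper states this as an Observation without proof, and your counting argument is exactly the reasoning implicitly intended, so there is nothing to add.
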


By studying a nearly regular $2$-balanced $5$-partite tournament $G_{2,5}$, we attain the following two lemmas.

\begin{lemma}\label{lemma rn4}
    Let $G_{2,5}$ be a nearly regular $2$-balanced $5$-tournament with partite sets $V_{i} = \{v_{i,1}, v_{i,2} \}, i\in [5]$. For any (semi-)regular partite set $V_i$ in $V(G_{2,5})$, if $n(V_i)=4$, then $V_i$ control\ding{173} some partite set $V_j$ where $j\ne i$.
\end{lemma}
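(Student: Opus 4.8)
The plan is to turn $n(V_i)=4$ into rigid local information about $V_i$ and then read off a control\ding{173} from it. Since $n(V_i)\le n(v_{i,1})+n(v_{i,2})\le 4$, the hypothesis forces $n(v_{i,1})=n(v_{i,2})=2$ \emph{and} forces the family $\mathcal{P}_1$ of non-strong partitions witnessed by $v_{i,1}$ and the family $\mathcal{P}_2$ witnessed by $v_{i,2}$ to be disjoint. Feeding $n(v_{i,1})=n(v_{i,2})=2$ into the Observation pins down the picture: if $V_i$ is regular, then both $v_{i,1}$ and $v_{i,2}$ are good to all four other partite sets, so $V_i\stackrel{1}{\to}V_j$ or $V_i\stackrel{2}{\to}V_j$ for every $j\ne i$; if $V_i$ is semi-regular, say with $v_{i,1}$ regular and $v_{i,2}$ irregular, then $v_{i,1}$ is good to all four others while $v_{i,2}$ is good to exactly three of them, and for the remaining set $V_k$ the vertex $v_{i,2}$ either beats both vertices of $V_k$ or loses to both, so that for every $j\notin\{i,k\}$ we still have $V_i\stackrel{1}{\to}V_j$ or $V_i\stackrel{2}{\to}V_j$.

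Next I would make the sixteen partitions explicit. Putting $v_{i,1}\in T_1$ and $v_{i,2}\in T_2$, a partition is a map $\sigma\colon[5]\setminus\{i\}\to\{1,2\}$ with $v_{j,\sigma(j)}\in T_1$. For $j\ne i$ let $a_j$ (resp.\ $b_j$) be the index in $V_j$ of the out-neighbour of $v_{i,1}$ (resp.\ $v_{i,2}$), which is well defined for every $j\ne i$ in the regular case and for every $j\notin\{i,k\}$ in the semi-regular case. Checking against Figure~\ref{relation}, whenever $V_i$ is good to $V_j$ one has $V_i\stackrel{1}{\to}V_j$ exactly when $a_j=3-b_j$ and $V_i\stackrel{2}{\to}V_j$ exactly when $a_j=b_j$. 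Moreover $v_{i,1}$ is a source, resp.\ a sink, of $T_1$ exactly for the partition with $\sigma(j)=a_j$ for all $j$, resp.\ $\sigma(j)=3-a_j$ for all $j$; likewise $v_{i,2}$ is a source or sink of $T_2$ exactly for the two complementary partitions determined by $b$ in the analogous way, with the coordinate $\sigma(k)$ left free in the semi-regular case because $v_{i,2}$ beats or loses to all of $V_k$. In every case $|\mathcal{P}_1|=|\mathcal{P}_2|=2$, consistently with the previous paragraph.

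Then I would compare $\mathcal{P}_1$ and $\mathcal{P}_2$. From the explicit descriptions, $\mathcal{P}_1\cap\mathcal{P}_2\ne\emptyset$ holds precisely when $a_j=3-b_j$ for all the relevant indices $j$, or $a_j=b_j$ for all the relevant $j$; by the dichotomy of the second step this is exactly the statement that every relation between $V_i$ and some $V_j$ (over the partite sets $V_j$, $j\ne i$, to which both $v_{i,1}$ and $v_{i,2}$ are good) is a control\ding{172}, or that every such relation is a control\ding{173}. Since $n(V_i)=4$ forces $\mathcal{P}_1\cap\mathcal{P}_2=\emptyset$, not every such relation is a control\ding{172}; as each of them is a control\ding{172} or a control\ding{173}, at least one is a control\ding{173}, i.e.\ $V_i\stackrel{2}{\to}V_j$ for some $j\ne i$, which is the conclusion.

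I expect the main obstacle to be the bookkeeping of the semi-regular case: there one has to drag the exceptional set $V_k$ — which realizes $V_i\stackrel{3}{\to}V_k$ or $V_k\stackrel{3}{\to}V_i$ rather than a control\ding{172}/\ding{173} — through the count of witnessing partitions and confirm that the free coordinate $\sigma(k)$ neither collapses $\mathcal{P}_1$ and $\mathcal{P}_2$ together nor spuriously pulls them apart. Splitting this case according to whether $v_{i,2}$ beats all of $V_k$ or loses to all of $V_k$ makes each sub-case a routine verification, and both lead to the same conclusion.
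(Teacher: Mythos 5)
Your proposal is correct and follows essentially the same route as the paper: both deduce $n(v_{i,1})=n(v_{i,2})=2$ from $n(V_i)=4$, invoke the Observation to get goodness (with one exceptional set $V_k$ in the semi-regular case), and then exploit the fact that if every relevant relation were control\ding{172} the partitions witnessing $\delta=0$ for $v_{i,1}$ and $v_{i,2}$ would coincide, which is incompatible with $n(V_i)=4$. The only difference is presentational: the paper argues by contradiction and counts $n(V_i)\leq 2$ (resp.\ $3$), while you make the sixteen partitions explicit via $\sigma$, $a_j$, $b_j$ and phrase the same obstruction as disjointness of the two witness families.
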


\begin{proof}
   Assume the contrary that no partite set $V_j$ is controlled\ding{173} by $V_i$.
   
   When $V_i$ is regular, since $n(V_i) = 4$, we have $n(v_{i,1})=n(v_{i,2})=2$.
   Then $v_{i,1}$ and $v_{i,2}$ are good to other partite sets. So $V_i$ is good to other partite sets. Since there is no partite set being controlled\ding{173} by $V_i$, then $V_i$ control\ding{172} all other partite sets. For any partition $\tau$ of $G_{2,5}$, if  $\delta_{T_{v_{i,1},\tau}}(v_{i,1})=0$ , then $\delta_{T_{v_{i,2},\tau}}(v_{1,2})=0$. So $n(V_i)=2$. It's a contradiction.

   When $V_i$ is semi-regular, there is a vertex in $V_i$, say $v_{i,1}$, such that $|d^{+}(v_{i,1})-d^{-}(v_{i,1})| = 2$. W.l.o.g., let $d^{+}(v_{i,1})-d^{-}(v_{i,1}) = 2$. Since $n(V_i) = 4$, we have $n(v_{i,1})=n(v_{i,2})=2$. So $v_{i,2}$ is good to all other partite sets, and $v_{i,1}$ is good to all other partite sets except for one. Since there is no partite set being controlled\ding{173} by $V_i$, then $V_i$ control\ding{174} $V_j$ such that $N^{+}(v_{i,1}) \supseteq V_j$ and control\ding{172} $V_k$, $k\neq i,j$. But in this case,  $n(V_i) = 3$, it's a contradiction.
\end{proof}

\begin{lemma}\label{be controlled}
    Let $G_{2,5}$ be a nearly regular $2$-balanced $5$-tournament with partite sets $V_{i} = \{v_{i,1}, v_{i,2} \}, i\in [5]$. If a partite set $V_i$ in $V(G_{2,5})$ is controlled\ding{173} by two other partite sets, then $n(V_i) = 0$. In particular, if a regular part $V_i$ in $V(G_{2,5})$ is controlled\ding{173} by another partite set, then $n(V_i) = 0$.
\end{lemma}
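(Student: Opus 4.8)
The plan is to reduce the statement to the behavior of the two vertices of $V_i$ taken individually, exploiting the inequality $n(V_i)\le n(v_{i,1})+n(v_{i,2})$ already recorded in the preliminaries together with the Observation.

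First I would unpack what it means for $V_i$ to be controlled\ding{173} by a partite set $V_j$. By definition $V_j\stackrel{2}{\to}V_i$ requires exactly two arcs from $V_j$ to $V_i$, hence exactly two arcs from $V_i$ to $V_j$, i.e. $|N^{+}(v_{i,1})\cap V_j|+|N^{+}(v_{i,2})\cap V_j|=2$; combined with the defining requirement that neither of these two numbers equals $1$, one of them is $0$ and the other is $2$. So one vertex of $V_i$ dominates all of $V_j$ while the other is dominated by all of $V_j$. In either case each of $v_{i,1},v_{i,2}$ has both of its arcs with $V_j$ pointing the same way, so \emph{neither $v_{i,1}$ nor $v_{i,2}$ is good to $V_j$}. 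This is the only structural input the proof needs.

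Next, suppose $V_i$ is controlled\ding{173} by two distinct partite sets $V_j$ and $V_k$, where $j,k\ne i$. Applying the previous step twice, neither $v_{i,1}$ nor $v_{i,2}$ is good to $V_j$, and neither is good to $V_k$; consequently each of $v_{i,1},v_{i,2}$ is good to at most two of the four partite sets other than $V_i$. Now I invoke the Observation: if $v_{i,1}$ is regular then $n(v_{i,1})=2$ would force $v_{i,1}$ to be good to all four other partite sets, and if $v_{i,1}$ is irregular then $n(v_{i,1})=2$ would force it to be good to three of them; since $n(v_{i,1})\in\{0,2\}$, in both cases we are forced into $n(v_{i,1})=0$. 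The same argument yields $n(v_{i,2})=0$, whence $n(V_i)\le n(v_{i,1})+n(v_{i,2})=0$.

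Finally, for the ``in particular'' clause, assume $V_i$ is regular and is controlled\ding{173} by a single partite set $V_j$. Then $v_{i,1},v_{i,2}$ are regular vertices that are not good to $V_j$, so by the Observation $n(v_{i,1})=n(v_{i,2})=0$ and again $n(V_i)=0$; one controlling set already suffices here precisely because for a regular vertex $n(\cdot)=2$ demands goodness to \emph{all} other partite sets. I do not expect a genuine obstacle: the only point that requires care is keeping straight which partite set plays which role in the definition of control\ding{173} (the constraint on in-/out-neighbourhoods is imposed on the controlled set), and once that is pinned down everything reduces to elementary degree bookkeeping.
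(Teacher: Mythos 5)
Your proposal is correct: control\ding{173} by $V_j$ forces $|N^{+}(v_{i,1})\cap V_j|,|N^{+}(v_{i,2})\cap V_j|\in\{0,2\}$, so neither vertex of $V_i$ is good to $V_j$, and the rest follows from $n(u)\in\{0,2\}$ together with the characterizations in the Observation (goodness to all four other partite sets for a regular vertex, to three of them for an irregular one), plus $n(V_i)\le n(v_{i,1})+n(v_{i,2})$.

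The route is genuinely different from the paper's. The paper does not invoke the Observation here; instead it splits into two cases according to whether the two controlling sets $V_j,V_\ell$ have the same out-neighbour inside $V_i$, and in each case (and in the regular ``in particular'' case, via a pigeonhole argument using near-regularity or regularity of the vertex) it exhibits, for each vertex of $V_i$, one partite set contained in its out-neighbourhood and one contained in its in-neighbourhood; this forces $\delta_{T_{u,\tau}}(u)\ge 1$ in every partition $\tau$, hence $n(u)=0$ directly. Your argument packages exactly this degree bookkeeping into the Observation's iff-characterization of $n(u)=2$, which lets you skip the case distinction and the explicit search for a dominated/dominating partite set; the price is that your proof leans entirely on the Observation, which the paper states without proof (it encodes the same product count $\prod_k|N^{+}(u)\cap V_k|+\prod_k|N^{-}(u)\cap V_k|$ that the paper's explicit argument re-derives), and on near-regularity implicitly through it, whereas the paper's version is self-contained at this point. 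Both are sound; yours is shorter and arguably cleaner given that the Observation is already part of the paper's toolkit.
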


\begin{proof}
    Let $V_j \stackrel{2}{\to} V_i$ and $V_{\ell} \stackrel{2}{\to} V_i$. We consider the following two cases.
    
    \textbf{Case 1: $N^{+}(V_j)\cap V_i = N^{+}(V_{\ell})\cap V_i$.}\\ W.l.o.g., let $N^{+}(V_j)\cap V_i = v_{i,1}$. Note that in this case, $N^{-}(V_j)\cap V_i = N^{-}(V_{\ell})\cap V_i = v_{i,2}$. As $G_{2,5}$ is nearly regular, there is another partite set $V_q(q\neq i,j,\ell)$ such that $V_q \subseteq N^{+}(v_{i,1})$. Then there is no partition $\tau$ such that $T_{v_{i,1},\tau}$ with $\delta(T_{v_{i,1},\tau})=0$, i.e $n(v_{i,1}) = 0$. Similar argument holds for $v_{i,2}$, we have $n(v_{i,2}) = 0$. Thus, we conclude that $n(V_i) = 0$.
    
    \textbf{Case 2: $N^{+}(V_j)\cap V_i \neq N^{+}(V_{\ell})\cap V_i$.}\\
    W.l.o.g., let $N^{+}(V_j)\cap V_i = v_{i,1}$. we have $V_j \subseteq N^{-}(v_{i,1}), V_{\ell} \subseteq N^{+}(v_{i,1})$ and $V_{\ell} \subseteq N^{-}(v_{i,2}), V_{j} \subseteq N^{+}(v_{i,2})$. Then $n(v_{i,1}) =0$ and $n(v_{i,2}) = 0$, which yields that $n(V_i) = 0$.

    Now let $V_i$ be regular, and $V_j \stackrel{2}{\to} V_i$. W.l.o.g., let $N^{+}(v_{i,1}) \supseteq V_{j}$ and $N^{-}(v_{i,2}) \supseteq V_{j}$. In order to make $v_{i,1}$ regular, there is another partite set $V_q(q\neq i,j)$ such that $V_j \subseteq N^{+}(v_{i,1})$. Then there is no partition $\tau$ such that $T_{v_{i,1},\tau}$ with $\delta_{T_{v_{i,1},\tau}} (v_{i,1})=0$, i.e $n(v_{i,1}) = 0$. Similar argument holds for $v_{i,2}$, we have $n(v_{i,2}) = 0$. Thus, $n(V_i) = 0$.
\end{proof}

We say a partition $\tau$ of $G_{2,5}$ is a \emph{good partition} if one of the two maximal tournaments is strong and the other is non-strong with exactly one vertex whose minimum degree is $0$ under $\tau$.

\begin{lemma}\label{good partition}
    Let $G_{2,5}$ be a nearly regular $2$-balanced $5$-tournament. If $G_{2,5}$ has no st-partition, then $G_{2,5}$ has at least 12 good partitions.
\end{lemma}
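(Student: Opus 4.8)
The plan is a short double-counting argument, and I do not expect a genuine obstacle here. Assume $G_{2,5}$ has no st-partition. By Lemma~\ref{number} there are exactly $16$ partitions of $G_{2,5}$ into maximal tournaments, and under this assumption every one of them is non-strong. I would call a partition \emph{bad} if it is non-strong but not good, write $g$ for the number of good partitions and $b$ for the number of bad ones, so that $g+b=16$; the goal then becomes the inequality $b\le 4$.

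For a partition $\tau$ I would introduce $m(\tau):=\#\{u\in V(G_{2,5}):\ \delta_{T_{u,\tau}}(u)=0\}$, the number of vertices that are a source or a sink in their own maximal tournament under $\tau$. The first step is to record the trichotomy coming from the fact (noted after Lemma~\ref{folk}) that a $5$-vertex tournament is non-strong precisely when its minimum degree is $0$: we have $m(\tau)=0$ iff $\tau$ is an st-partition; $m(\tau)=1$ iff $\tau$ is good (the unique min-degree-zero vertex makes its own tournament non-strong, forces the complementary tournament---whose vertices then all have positive minimum degree---to be strong, and leaves that non-strong tournament with exactly one vertex of minimum degree $0$); and $m(\tau)\ge 2$ for every bad partition. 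Since there is no st-partition, summing over all $16$ partitions gives
\[
\sum_{\tau} m(\tau)\ \ge\ 1\cdot g+2\cdot b\ =\ (g+b)+b\ =\ 16+b .
\]

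The second step is to evaluate the same sum by switching the order of summation:
\[
\sum_{\tau} m(\tau)\ =\ \sum_{u\in V(G_{2,5})} n(u)\ \le\ 10\cdot 2\ =\ 20 ,
\]
using that $G_{2,5}$ has $10$ vertices and $n(u)\le 2$ for every vertex $u$. Comparing the two displays yields $16+b\le 20$, hence $b\le 4$ and $g=16-b\ge 12$, which is the claim.

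The only part that needs a moment's care is the middle case of the trichotomy, i.e. checking that $m(\tau)=1$ is equivalent to $\tau$ being a good partition; this is immediate from the characterization of non-strong $5$-vertex tournaments as exactly those possessing a source or a sink, and everything else is bookkeeping, with the uniform bound $n(u)\le 2$ supplying the ceiling $20$. The real work of the paper presumably lies not in this lemma itself but in combining the guaranteed $12$ good partitions with the structural Lemmas~\ref{lemma rn4} and~\ref{be controlled} to pin down $ST(2)=6$.
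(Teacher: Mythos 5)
Your proof is correct and follows essentially the same double-counting argument as the paper: bounding $\sum_{u} n(u)\le 10\cdot 2=20$ and comparing it with the $16$ partitions, each of which must contribute at least one min-degree-zero vertex. If anything, your trichotomy on $m(\tau)$ is slightly more careful than the paper's terse count of non-strong maximal tournaments, since it explicitly justifies the ``exactly one vertex of minimum degree $0$'' clause in the definition of a good partition.
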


\begin{proof}
    By Lemma \ref{number}, the number of partitions of $G_{2,5}$ into maximal tournaments is $16$. Under these partitions, there are $32$ maximal tournaments. And the number of non-strong maximal tournaments is at most $\sum_{u\in V(G_{2,5})} n(u) \leq 10 \times 2 = 20$. Each partition contains at least one non-strong maximal tournament since $G_{2,5}$ has no st-partition. Therefore, $G_{2,5}$ has at least $12$ good partitions.
\end{proof}

\section{The unique regular $2$-balanced $5$-partite tournament with no st-partition}\label{3}

We first explore the structure of regular $2$-balanced $5$-partite tournaments with no st-partition. 

\begin{theorem}\label{st 5}
    Let $G_{2,5}$ be a regular $2$-balanced $5$-partite tournament with partite sets $V_{i} = \{v_{i,1}, v_{i,2} \}, i\in [5]$. If $G_{2,5}$ has no st-partition, then there is a partite set $V_i$ such that $V_j \stackrel{2}{\to} V_i$ for all $j \in [5]\setminus \{i\}$. Furthermore, the other four partite sets control\ding{172} each other.
\end{theorem}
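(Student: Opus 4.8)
The plan is to run a global counting argument over the $16$ partitions of $G_{2,5}$ and feed the outcome into Lemmas \ref{lemma rn4} and \ref{be controlled}. Since $G_{2,5}$ has no st-partition we have $N_1=0$, hence $N_2=16$, and therefore $\sum_{i=1}^{5} n(V_i)\ge N_2=16$. As $G_{2,5}$ is regular, every partite set is regular, so the ``in particular'' clause of Lemma \ref{be controlled} tells us that any partite set which is controlled\ding{173} by some other partite set has $n(\cdot)=0$. I would record the control\ding{173} relations in an auxiliary digraph $H$ on vertex set $[5]$ (with an arc $i\to j$ iff $V_i\stackrel{2}{\to}V_j$); then every $i$ with $n(V_i)>0$ has in-degree $0$ in $H$.

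First I would determine the values $n(V_i)$. Put $S=\{\,i: n(V_i)>0\,\}$. Since each $n(V_i)\le 4$ and $\sum_i n(V_i)\ge 16$, we get $|S|\ge 4$. If $|S|=5$ then $H$ has no arcs, i.e.\ $G_{2,5}$ has no control\ding{173} relation at all; but then Lemma \ref{lemma rn4} forbids $n(V_i)=4$ for every $i$ (such a $V_i$ would be forced to control\ding{173} some part), so $n(V_i)\le 3$ for all $i$ and $\sum_i n(V_i)\le 15$, a contradiction. Hence $|S|=4$; after relabelling, $n(V_5)=0$ and $n(V_i)>0$ for $i\in\{1,2,3,4\}$, and then $\sum_{i=1}^{4} n(V_i)\ge 16$ forces $n(V_1)=n(V_2)=n(V_3)=n(V_4)=4$.

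Next I would read off the control\ding{173} structure. Each of $V_1,\dots,V_4$ is a source of $H$, so no control\ding{173} arc enters $\{1,2,3,4\}$, i.e.\ every control\ding{173} arc of $G_{2,5}$ points to $V_5$. By Lemma \ref{lemma rn4}, $n(V_j)=4$ forces $V_j$ to control\ding{173} some partite set, and the only candidate is $V_5$; thus $V_j\stackrel{2}{\to}V_5$ for every $j\in[5]\setminus\{5\}$, which is the first assertion of the theorem with $i=5$.

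Finally, for the second assertion I would argue as follows. From $n(V_j)=4\le n(v_{j,1})+n(v_{j,2})$ together with $n(v_{j,\ell})\in\{0,2\}$ (Observation) we get $n(v_{j,1})=n(v_{j,2})=2$, so by the Observation (regular case) each of $v_{j,1},v_{j,2}$ is good to all other partite sets; hence $V_j$ is good to $V_k$ for all distinct $j,k\in\{1,2,3,4\}$. Being good to $V_k$ implies $V_j\stackrel{1}{\to}V_k$ or $V_j\stackrel{2}{\to}V_k$, and the latter is impossible since no control\ding{173} arc lies inside $\{1,2,3,4\}$; therefore $V_j\stackrel{1}{\to}V_k$, and as control\ding{172} is symmetric the four partite sets $V_1,V_2,V_3,V_4$ control\ding{172} each other. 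The only genuine case analysis is the exclusion of $|S|=5$ and the ``good to $\Rightarrow$ control\ding{172}'' step; everything else is bookkeeping built on the two lemmas, so I do not expect a serious obstacle beyond making sure the regularity hypothesis is invoked exactly where the lemmas need it.
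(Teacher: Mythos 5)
Your proof is correct and follows essentially the same route as the paper: the count $\sum_{i=1}^{5} n(V_i)\ge 16$, the pigeonhole step giving parts with $n(\cdot)=4$, and the combination of Lemma \ref{lemma rn4} with the regular case of Lemma \ref{be controlled} to force one part with $n=0$ and four parts with $n=4$ that all control\ding{173} it and are mutually good, hence control\ding{172} each other. If anything, your explicit exclusion of control\ding{173} relations among $V_1,\dots,V_4$ (and the application of Lemma \ref{lemma rn4} to each of them separately) spells out a step the paper passes over tersely.
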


\begin{proof}
    Since $G_{2,5}$ has no st-partition, $\sum_{i=1}^{5} n(V_i) \geq 16$. So, there is a partite set $V_j$ such that $n(V_j)=4$. By Lemma \ref{lemma rn4}, there exists a partite set $V_i$ such that $V_j \stackrel{2}{\to} V_i$. By the fact that $V_i$ is regular and Lemma \ref{be controlled}, $n(V_i)=0$. In order to make $\sum_{i=1}^5 n(V_{i}) \geq 16$, we have $n(V_j) = 4$ for all $j \in [5]$ and $j\ne i$. So, for any vertex $u\in {G_{2,5}}/{\{v_{i,1},v_{i,2}\}}$, $u$ is good to all other partite sets. Then $V_j$ is good to partite sets for all $j \in [5], j\neq i$. So $V_i$ is controlled\ding{173} by all other four partite sets. And the other four partite sets controll\ding{172} each other.
\end{proof}

Let $G_1$ and $G_2$ be the digraphs as shown in Figure \ref{suitableg42}.

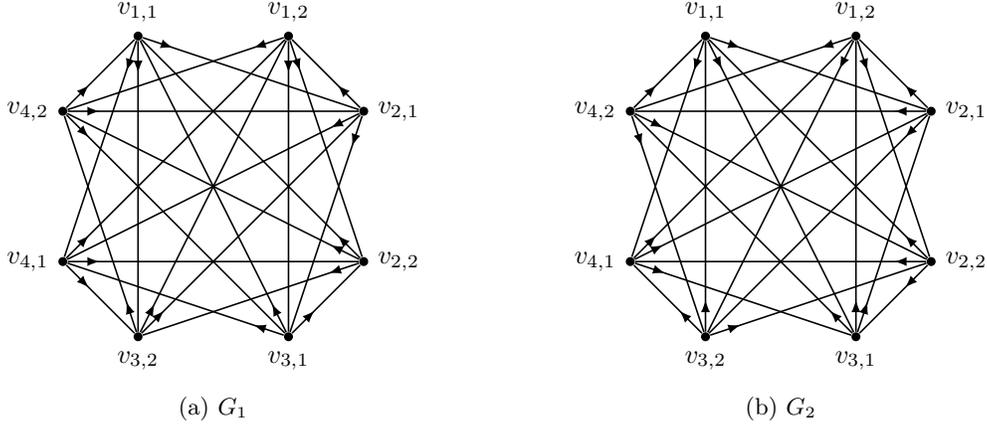
\begin{figure}[h]
    \centering
    \begin{subfigure}[t]{.45\textwidth}
        \centering
        \begin{tikzpicture}
    [
    corner/.style={ 
        circle,
        fill,   
        inner sep=1.2pt},	
    arrowline/.style={
        line width=0.6pt,
        postaction = decorate,
        decoration = {markings,
            mark = at position 12pt with \arrow{latex}
        }
    },
    ]

    \node[corner,
    label = {above:$v_{1,1}$}] (v11) at (-1,2){} ;
    
    \node[corner,
    label = {above:$v_{1,2}$}] (v12) at (1,2){} ;
    
    \node[corner,
    label = {right:$v_{2,1}$}] (v21) at (2,1){} ;

    \node[corner,
    label = {right:$v_{2,2}$}] (v22) at (2,-1){} ;

    \node[corner,
    label = {below:$v_{3,1}$}] (v31) at (1,-2){} ;

    \node[corner,
    label = {below:$v_{3,2}$}] (v32) at (-1,-2){} ;

    \node[corner,
    label = {left:$v_{4,1}$}] (v41) at (-2,-1){} ;

    \node[corner,
    label = {left:$v_{4,2}$}] (v42) at (-2,1){} ;
    
    \draw [arrowline] (v11) to (v41);
    \draw [arrowline] (v11) to (v21);
    \draw [arrowline] (v11) to (v32);
    
    \draw [arrowline] (v12) to (v42);
    \draw [arrowline] (v12) to (v22);
    \draw [arrowline] (v12) to (v31);
    
    \draw [arrowline] (v21) to (v31);
    \draw [arrowline] (v21) to (v41);
    \draw [arrowline] (v21) to (v12);

    \draw [arrowline] (v22) to (v32);
    \draw [arrowline] (v22) to (v42);
    \draw [arrowline] (v22) to (v11);

    \draw [arrowline] (v31) to (v11);
    \draw [arrowline] (v31) to (v41);
    \draw [arrowline] (v31) to (v22);

    \draw [arrowline] (v32) to (v12);
    \draw [arrowline] (v32) to (v42);
    \draw [arrowline] (v32) to (v21);

    \draw [arrowline] (v41) to (v12);
    \draw [arrowline] (v41) to (v22);
    \draw [arrowline] (v41) to (v32);

    \draw [arrowline] (v42) to (v11);
    \draw [arrowline] (v42) to (v21);
    \draw [arrowline] (v42) to (v31);
    \end{tikzpicture}
    \caption{$G_1$}
    \label{graph 42}
    \end{subfigure}
    \begin{subfigure}[t]{.45\textwidth}
        \centering
        \begin{tikzpicture}
    [
    corner/.style={ 
        circle,
        fill,   
        inner sep=1.2pt},	
    arrowline/.style={
        line width=0.6pt,
        postaction = decorate,
        decoration = {markings,
            mark = at position 12pt with \arrow{latex}
        }
    },
    ]

    \node[corner,
    label = {above:$v_{1,1}$}] (v11) at (-1,2){} ;
    
    \node[corner,
    label = {above:$v_{1,2}$}] (v12) at (1,2){} ;
    
    \node[corner,
    label = {right:$v_{2,1}$}] (v21) at (2,1){} ;

    \node[corner,
    label = {right:$v_{2,2}$}] (v22) at (2,-1){} ;

    \node[corner,
    label = {below:$v_{3,1}$}] (v31) at (1,-2){} ;

    \node[corner,
    label = {below:$v_{3,2}$}] (v32) at (-1,-2){} ;

    \node[corner,
    label = {left:$v_{4,1}$}] (v41) at (-2,-1){} ;

    \node[corner,
    label = {left:$v_{4,2}$}] (v42) at (-2,1){} ;
    
    \draw [arrowline] (v11) to (v21);
    \draw [arrowline] (v11) to (v31);
    \draw [arrowline] (v11) to (v41);
    
    \draw [arrowline] (v12) to (v22);
    \draw [arrowline] (v12) to (v32);
    \draw [arrowline] (v12) to (v42);
    
    \draw [arrowline] (v21) to (v12);
    \draw [arrowline] (v21) to (v42);
    \draw [arrowline] (v21) to (v32);

    \draw [arrowline] (v22) to (v11);
    \draw [arrowline] (v22) to (v41);
    \draw [arrowline] (v22) to (v31);

    \draw [arrowline] (v31) to (v21);
    \draw [arrowline] (v31) to (v12);
    \draw [arrowline] (v31) to (v42);

    \draw [arrowline] (v32) to (v22);
    \draw [arrowline] (v32) to (v11);
    \draw [arrowline] (v32) to (v41);

    \draw [arrowline] (v41) to (v12);
    \draw [arrowline] (v41) to (v21);
    \draw [arrowline] (v41) to (v31);

    \draw [arrowline] (v42) to (v11);
    \draw [arrowline] (v42) to (v22);
    \draw [arrowline] (v42) to (v32);
    \end{tikzpicture}
    \caption{$G_2$}
    \end{subfigure}
    \caption{Suitable $G_{2,4}$}
    \label{suitableg42}
\end{figure}

\begin{lemma}\label{g1g2}
    Let $G_{2,4}$ be a regular $2$-balanced $4$-partite tournament with partite sets $V_{i} = \{v_{i,1}, v_{i,2}\}, i\in [4]$. If all partite sets control\ding{172} each other, then $G_{2,4}$ is isomorphic to $G_1$ or $G_2$.
\end{lemma}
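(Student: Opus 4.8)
The plan is to normalise the vertex labelling, boil the problem down to eight explicit digraphs, and match each against the two pictures in Figure~\ref{suitableg42}.

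First I would normalise the three bundles incident to $V_1$. After relabelling the two vertices inside each of $V_2,V_3,V_4$ we may assume $v_{1,1}\to v_{2,1}$, $v_{1,1}\to v_{3,1}$, $v_{1,1}\to v_{4,1}$. Since $G_{2,4}$ is regular, $d^{+}(v_{1,1})=d^{-}(v_{1,1})=3$, and control\ding{172} forces $|N^{+}(v_{1,1})\cap V_j|=1$ for $j=2,3,4$; hence $N^{+}(v_{1,1})=\{v_{2,1},v_{3,1},v_{4,1}\}$ and $N^{-}(v_{1,1})=\{v_{2,2},v_{3,2},v_{4,2}\}$. The only orientation of the complete bipartite graph between $V_1$ and $V_j$ in which every vertex has in- and out-degree $1$ is a directed $4$-cycle, and since it contains $v_{1,1}\to v_{j,1}$ and $v_{j,2}\to v_{1,1}$ it must be $v_{1,1}\to v_{j,1}\to v_{1,2}\to v_{j,2}\to v_{1,1}$. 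So all three bundles at $V_1$ become fixed; in particular $N^{+}(v_{1,2})=\{v_{2,2},v_{3,2},v_{4,2}\}$.

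Next, only the bundles $V_2V_3$, $V_2V_4$, $V_3V_4$ remain to be oriented; by control\ding{172} and regularity each of them is again a directed $4$-cycle, hence has exactly two possibilities. For $2\le i<j\le 4$ put $\varepsilon_{ij}=+$ if $v_{i,1}\to v_{j,1}$ (equivalently $v_{i,2}\to v_{j,2}$) and $\varepsilon_{ij}=-$ otherwise. This gives $2^{3}=8$ digraphs, each of which one checks at once to be regular with all bundles control\ding{172}, and conversely every $G_{2,4}$ as in the statement is, after the first step, exactly one of these eight. I would then show each of the eight is isomorphic to $G_1$ or to $G_2$: the most direct route is to list the eight patterns $(\varepsilon_{23},\varepsilon_{24},\varepsilon_{34})$ and, for each, write down the relabelling (a permutation of $V_1,\dots,V_4$ together with swaps inside parts) carrying that digraph onto $G_1$ or onto $G_2$ as drawn in Figure~\ref{suitableg42}. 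This can be shortened using the residual symmetry — permutations of $V_2,V_3,V_4$ and swaps of $V_1$ with one of $V_2,V_3,V_4$, each followed by re-normalising — which acts on the eight $\varepsilon$-patterns and collapses them onto two representatives, say $(+,+,+)$ (on inspection isomorphic to $G_2$) and $(-,+,-)$ (isomorphic to $G_1$). To confirm that these two are genuinely non-isomorphic — so the conclusion really is ``$G_1$ or $G_2$'' and not a single graph — I would compute a cheap invariant, e.g. the number of transversals $\{w_1,w_2,w_3,w_4\}$ with $w_i\in V_i$ spanning a transitive tournament: a short count gives $0$ for the pattern $(-,+,-)$ and a positive number for $(+,+,+)$.

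The hard part will be the bookkeeping in the last step. The part-swapping symmetries do \emph{not} simply permute the coordinates of $(\varepsilon_{23},\varepsilon_{24},\varepsilon_{34})$: exchanging two parts also reverses the orientation of the bundle between them (and permuting $V_2,V_3,V_4$ is itself such an exchange), so the reduction of the eight cases to two must be done carefully; it is safest to pin down the final count with an isomorphism invariant as above, or else to simply grind through all eight cases by hand.
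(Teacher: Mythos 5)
Your proposal is correct and is essentially the paper's own argument in different packaging: your $\varepsilon$-pattern on the remaining bundles is exactly the paper's auxiliary tournament $T(G_{2,4})$ (with $V_1$ normalised to dominate the other parts), and your collapse of the eight cases under permutations of parts and swaps inside parts is the paper's observation that swapping the two vertices of a part reverses the auxiliary arcs at that vertex, which merges the four $4$-vertex tournaments into the two classes yielding $G_1$ and $G_2$. Your representatives do check out --- $(+,+,+)$ gives a transitive auxiliary tournament and hence $G_2$, while $(-,+,-)$ gives a $3$-cycle on $V_2,V_3,V_4$ under the source $V_1$ and hence $G_1$ --- so the remaining bookkeeping is routine.
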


\begin{proof}
    In this proof, we say that $V_i$ control\ding{172} $V_j$ if $v_{i,1}$ dominates $v_{j,1}$ and $v_{i,2}$ dominates $v_{j,2}$, otherwise we say $V_j$ control\ding{172} $V_i$. Now we construct an auxiliary tournament $T(G_{2,4})$ of $G_{2,4}$ to record the relationships between partite sets of $G_{2,4}$. Let $V(T(G_{2,4})) = \{V_1, V_2, V_3, V_4\}$, and $V_i V_j$ is an arc of $T(G_{2,4})$ if and only if $V_i$ control\ding{172} $V_j$. 
    Note that the construction can be reversed: given a $4$-vertex tournament $T$ and a numbering of its vertex set $\{v_1,v_2,v_3,v_4\}$, there is a regular $2$-balanced $4$-partite tournament $G_{2,4}$ with partite sets $V_i, i\in [4]$ such that $T(G_{2,4}) = T$.
    Let $G'_{2,4}$ be the $2$-balanced $4$-partite tournaments obtained from $G_{2,4}$ by replacing partite set $V_{4} = \{v_{4,1}, v_{4,2}\}$ by $V'_{4} = \{v'_{4,1}, v'_{4,2}\}$ in which $v'_{4,1} = v_{4,2}$ and $v'_{4,2}= v_{4,1}$. Then $G'_{2,4}$ is isomorphic to $G_{2,4}$. And the auxiliary graph $T(G'_{2,4})$ can be obtained from  $T(G_{2,4})$ by reversing arcs incident to $V_4$. 
    
    There are exactly four kinds of $4$-vertex tournament up to isomorphism as shown in Figure \ref{T4}. Note that $T_1$ can be obtained from $T_2$ by reversing all arcs incident to $v_4$ and $T_3$ is isomorphic to the tournament obtained from $T_4$ by reversing all arcs incident to $v_3$. Thus, the corresponding $G_{2,4}$ of $T_1$ is isomorphic to the corresponding $G_{2,4}$ of $T_2$ and the corresponding $G_{2,4}$ of $T_3$ is isomorphic to the corresponding $G_{2,4}$ of $T_4$. 

    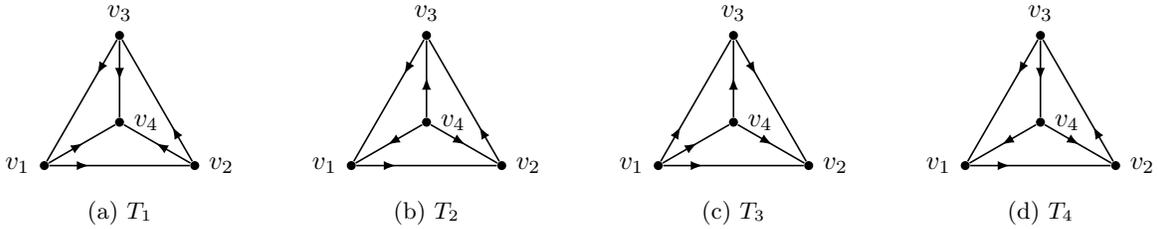
\begin{figure}[h]
    \centering
    \begin{subfigure}[t]{.24\textwidth}
        \centering
        \begin{tikzpicture}
        [
        corner/.style={ 
            circle,
            fill,   
            inner sep=1.2pt},	
        arrowline/.style={
            line width=0.6pt,
            postaction = decorate,
            decoration = {markings,
                mark = at position 15pt with \arrow{latex}
            }
        },
        ]

        \node[corner,
        label = {left:$v_1$}] (v1) at (-1,0){} ;
        
        \node[corner,
        label = {right:$v_2$}] (v2) at (1,0){} ;
        
        \node[corner,
        label = {above:$v_3$}] (v3) at (0,1.732){} ;

        \node[corner,
        label = {right:$v_4$}] (v4) at (0,0.577){} ;

        \draw [arrowline] (v1) to (v4);
        \draw [arrowline] (v2) to (v4);
        \draw [arrowline] (v3) to (v4);
        
        \draw [arrowline] (v1) to (v2);	
        \draw [arrowline] (v2) to (v3);	
        \draw [arrowline] (v3) to (v1);	
    \end{tikzpicture}
    \caption{$T_1$}
    \end{subfigure}
    \begin{subfigure}[t]{.24\textwidth}
        \centering
        \begin{tikzpicture}
        [
        corner/.style={ 
            circle,
            fill,   
            inner sep=1.2pt},	
        arrowline/.style={
            line width=0.6pt,
            postaction = decorate,
            decoration = {markings,
                mark = at position 15pt with \arrow{latex}
            }
        },
        ]

        \node[corner,
        label = {left:$v_1$}] (v1) at (-1,0){} ;
        
        \node[corner,
        label = {right:$v_2$}] (v2) at (1,0){} ;
        
        \node[corner,
        label = {above:$v_3$}] (v3) at (0,1.732){} ;

        \node[corner,
        label = {right:$v_4$}] (v4) at (0,0.577){} ;

        \draw [arrowline] (v4) to (v1);
        \draw [arrowline] (v4) to (v2);
        \draw [arrowline] (v4) to (v3);
        
        \draw [arrowline] (v1) to (v2);	
        \draw [arrowline] (v2) to (v3);	
        \draw [arrowline] (v3) to (v1);	
    \end{tikzpicture}
    \caption{$T_2$}
    \end{subfigure}
    \begin{subfigure}[t]{.24\textwidth}
        \centering
        \begin{tikzpicture}
        [
        corner/.style={ 
            circle,
            fill,   
            inner sep=1.2pt},	
        arrowline/.style={
            line width=0.6pt,
            postaction = decorate,
            decoration = {markings,
                mark = at position 15pt with \arrow{latex}
            }
        },
        ]

        \node[corner,
        label = {left:$v_1$}] (v1) at (-1,0){} ;
        
        \node[corner,
        label = {right:$v_2$}] (v2) at (1,0){} ;
        
        \node[corner,
        label = {above:$v_3$}] (v3) at (0,1.732){} ;

        \node[corner,
        label = {right:$v_4$}] (v4) at (0,0.577){} ;

        \draw [arrowline] (v1) to (v4);
        \draw [arrowline] (v4) to (v2);
        \draw [arrowline] (v4) to (v3);
        
        \draw [arrowline] (v1) to (v2);	
        \draw [arrowline] (v3) to (v2);	
        \draw [arrowline] (v1) to (v3);	
    \end{tikzpicture}
    \caption{$T_3$}
    \end{subfigure}
    \begin{subfigure}[t]{.24\textwidth}
        \centering
        \begin{tikzpicture}
        [
        corner/.style={ 
            circle,
            fill,   
            inner sep=1.2pt},	
        arrowline/.style={
            line width=0.6pt,
            postaction = decorate,
            decoration = {markings,
                mark = at position 15pt with \arrow{latex}
            }
        },
        ]

        \node[corner,
        label = {left:$v_1$}] (v1) at (-1,0){} ;
        
        \node[corner,
        label = {right:$v_2$}] (v2) at (1,0){} ;
        
        \node[corner,
        label = {above:$v_3$}] (v3) at (0,1.732){} ;

        \node[corner,
        label = {right:$v_4$}] (v4) at (0,0.577){} ;

        \draw [arrowline] (v4) to (v1);
        \draw [arrowline] (v4) to (v2);
        \draw [arrowline] (v3) to (v4);
        
        \draw [arrowline] (v1) to (v2);	
        \draw [arrowline] (v2) to (v3);	
        \draw [arrowline] (v3) to (v1);	
    \end{tikzpicture}
    \caption{$T_4$}
    \end{subfigure}
    \caption{Four kinds of 4-vertex tournaments}
    \label{T4}
\end{figure}

    The corresponding $G_{2,4}$ of $T_1$ is $G_1$ and the corresponding $G_{2,4}$ of $T_3$ is $G_2$. This means that such $G_{2,4}$ is isomorphic to $G_1$ or $G_2$.
\end{proof}

\begin{theorem}\label{H0}
    There is a unique regular 2-balanced 5-partite tournament $H_0$ with no st-partition.
\end{theorem}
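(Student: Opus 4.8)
The proof splits into a uniqueness part and an existence part, and the engine of both is the structural reduction supplied by Theorem~\ref{st 5} and Lemma~\ref{g1g2}. The plan is as follows. Let $G_{2,5}$ be a regular $2$-balanced $5$-partite tournament with no st-partition. By Theorem~\ref{st 5} we may relabel the partite sets so that $V_1,V_2,V_3,V_4$ control\ding{172} one another while $V_j\stackrel{2}{\to}V_5$ for every $j\in[4]$; by Lemma~\ref{g1g2} the induced sub-$G_{2,4}$ on $V_1\cup V_2\cup V_3\cup V_4$ is isomorphic to $G_1$ or $G_2$. Thus $G_{2,5}$ is obtained from $G_1$ or $G_2$ by gluing on the partite set $V_5=\{v_{5,1},v_{5,2}\}$, subject only to the four control\ding{173} constraints.

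Next I pin down that gluing. Unpacking the definition of control\ding{173}: for each $j\in[4]$ we have $|N^{+}(v_{5,1})\cap V_j|,|N^{+}(v_{5,2})\cap V_j|\in\{0,2\}$, and since exactly two arcs leave $V_5$ towards $V_j$, for each $j$ exactly one of $v_{5,1},v_{5,2}$ dominates all of $V_j$ while the other is dominated by all of $V_j$. Regularity of $v_{5,1}$ (it meets $V_1\cup\dots\cup V_4$ in all $8$ possible arcs and must have out-degree $4$) forces $v_{5,1}$ to dominate exactly two of $V_1,\dots,V_4$ entirely and to be dominated by the other two entirely; $v_{5,2}$ takes the complementary role. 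Hence an admissible $V_5$ is encoded by a partition of $\{V_1,V_2,V_3,V_4\}$ into two pairs, and there are only three such partitions. Combined with the two choices of base this leaves at most six tournaments up to the obvious relabelling, and after factoring through the automorphism groups of $G_1$ and $G_2$ (together with the symmetry $v_{5,1}\leftrightarrow v_{5,2}$) the list is shorter still. For each remaining candidate I check its partitions: by Lemma~\ref{number} there are only $16$ of them, and since a tournament on $5$ vertices is strong iff it has no vertex of in-degree or out-degree $0$, each partition is disposed of in a line or two. All but one candidate turns out to possess an st-partition — for those I simply exhibit one — and the unique survivor is named $H_0$.

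For existence I then display $H_0$ explicitly, say by giving its arc set via a picture in the style of Figure~\ref{suitableg42} augmented by the four arcs between $V_5$ and each $V_j$, and verify once more that in each of its $16$ partitions at least one of the two $5$-vertex tournaments is non-strong, by pointing to a vertex of in-degree or out-degree $0$ inside it. Uniqueness of $H_0$ then follows, since by the reduction above every regular $2$-balanced $5$-partite tournament with no st-partition collapses onto this single tournament.

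I expect the real work to be bookkeeping in the middle paragraph rather than any one deep idea: the description coming out of Theorem~\ref{st 5} is necessary but not sufficient, so a genuine finite case analysis is unavoidable, and the delicate point is handling isomorphisms correctly — different (base, gluing) pairs may produce the same $G_{2,5}$, so one must neither over- nor under-count candidates, and one must eliminate the spurious ones by actually producing st-partitions. Anchoring the analysis to the auxiliary $4$-vertex tournament $T(G_{2,4})$ from the proof of Lemma~\ref{g1g2}, whose arc-reversals encode precisely the relabelling $v_{j,1}\leftrightarrow v_{j,2}$ inside a partite set, is the way I would keep the case count honest and manageable.
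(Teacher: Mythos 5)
Your proposal is correct and follows essentially the same route as the paper: reduce via Theorem~\ref{st 5} and Lemma~\ref{g1g2} to a base $G_1$ or $G_2$ with $V_5$ attached so that each vertex of $V_5$ dominates two of the four partite sets entirely and is dominated by the other two, then finish with a finite check of the possible attachments and of the $16$ partitions of the surviving candidate. The only shortcut in the paper is that it eliminates every $G_2$-based candidate at once, since a single strong transversal partition of $G_2$ extends to an st-partition regardless of how $V_5$ is glued on, and it notes that the three $G_1$-attachments are isomorphic --- precisely the bookkeeping you outline.
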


\begin{proof}
    Suppose the partite sets of $G_{2,5}$ are $V_{i} = \{v_{i,1}, v_{i,2}\}, i\in [5]$. By Lemma \ref{st 5}, we know that there is a partite set of $G_{2,5}$, say $V_5$, being controlled\ding{173} by some other partite sets. As $V_5$ is regular, the arcs incident to $V_5$ would be as shown in Figure \ref{V5}.

    \begin{figure}[h]
        \centering
        \begin{tikzpicture}
    [
    corner/.style={ 
        circle,
        fill,   
        inner sep=1.2pt},	
    arrowline/.style={
        line width=0.6pt,
        postaction = decorate,
        decoration = {markings,
            mark = at position 0.3 with \arrow{latex}
        }
    },
    ]

    \node[corner,
    label = {above:$v_{5,1}$}] (v51) at (-1.5,2){} ;
    
    \node[corner,
    label = {above:$v_{5,2}$}] (v52) at (1.5,2){} ;
    
    \node[corner,
    label = {below:$v_{i,1}$}] (v1) at (-5,0){} ;

    \node[corner,
    label = {below:$v_{i,2}$}] (v2) at (-4,0){} ;

    \node[corner,
    label = {below:$v_{j,1}$}] (v3) at (-2,0){} ;

    \node[corner,
    label = {below:$v_{j,2}$}] (v4) at (-1,0){} ;

    \node[corner,
    label = {below:$v_{k,1}$}] (v5) at (1,0){} ;

    \node[corner,
    label = {below:$v_{k,2}$}] (v6) at (2,0){} ;
    
    \node[corner,
    label = {below:$v_{\ell,2}$}] (v7) at (4,0){} ;

    \node[corner,
    label = {below:$v_{\ell,2}$}] (v8) at (5,0){} ;
    
    \draw [arrowline] (v51) to (v1);
    \draw [arrowline] (v51) to (v2);
    \draw [arrowline] (v51) to (v3);
    \draw [arrowline] (v51) to (v4);

    \draw [arrowline] (v52) to (v5);
    \draw [arrowline] (v52) to (v6);
    \draw [arrowline] (v52) to (v7);
    \draw [arrowline] (v52) to (v8);

    \draw [arrowline, blue] (v1) to (v52);
    \draw [arrowline, blue] (v2) to (v52);
    \draw [arrowline, blue] (v3) to (v52);
    \draw [arrowline, blue] (v4) to (v52);

    \draw [arrowline, blue] (v5) to (v51);
    \draw [arrowline, blue] (v6) to (v51);
    \draw [arrowline, blue] (v7) to (v51);
    \draw [arrowline, blue] (v8) to (v51);
    \end{tikzpicture}
        \caption{Arcs incident to $V_5$}
        \label{V5}
    \end{figure}
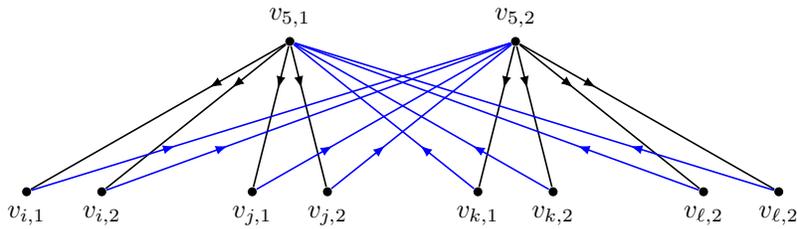
    
    Let $G := G_{2,5} \setminus V_5$. Then $G$ is a regular $2$-balanced $4$-partite tournament and the partite sets of $G$ control\ding{172} each other. By Lemma \ref{g1g2}, we have that $G$ is isomorphic to $G_1$ or $G_2$. 

    But for $G_2$, there is a partition $V_a = \{v_{1,1}, v_{2,2}, v_{3,1}, v_{4,2}\}, V_b = \{v_{1,2}, v_{2,1}, v_{3,2}, v_{4,1}\}$ such that both $D[V_a]$ and $D[V_b]$ are strong. Then $V_a = \{v_{1,1}, v_{2,2}, v_{3,1}, v_{4,2}, v_{5,1}\}, V_b = \{v_{1,2}, v_{2,1}, v_{3,2}, v_{4,1}, v_{5,2}\}$ would be a st-partition of $G_{2,5}$. Now let $H_0$ be the joint of $G_1$ and partite sets $V_5 = \{v_{5,1}, v_{5,2}\}$ such that $V_5$ is controlled\ding{173} by all other partite sets. Note such $H_0$ is unique up to isomorphism. Any partition of $H_0$ contains exactly one non-strong maximal tournament. Therefore, $H_0$ is the unique regular $2$-balanced $5$-partite tournament with no st-partition, up to isomorphism.
\end{proof}

Thus, we have the following lemma.

\begin{lemma}\label{st25}
    $ST(2) \geq 6$.
\end{lemma}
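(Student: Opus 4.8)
The plan is to obtain this as an immediate consequence of Theorem~\ref{H0}. By definition, $ST(2)$ is the smallest integer $c'$ such that \emph{every} regular $2$-balanced $c$-partite tournament has an st-partition for all $c \geq c'$. Theorem~\ref{H0} produces a regular $2$-balanced $5$-partite tournament $H_0$ with no st-partition, so the value $c' = 5$ fails the required property; combined with the lower bound $ST(2) \geq 5$ from Theorem~\ref{st27}, this forces $ST(2) \geq 6$. So the body of the proof is one line: $H_0$ is a $G_{2,5}$ without an st-partition, hence $ST(2) \neq 5$, hence $ST(2)\geq 6$.

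It is worth recording how the counterexample $H_0$ arises, since that is the real content being invoked. One first argues (Theorem~\ref{st 5}) that if a regular $G_{2,5}$ has no st-partition then, by the counting bound $N_2 \leq \sum_i n(V_i)$ together with Lemmas~\ref{lemma rn4} and~\ref{be controlled}, its partite sets must be arranged so that one distinguished set $V_5$ is control\ding{173}-dominated by all four others while those four control\ding{172} one another pairwise. Lemma~\ref{g1g2} then classifies the induced $G_{2,4}$ on the four ordinary sets as either $G_1$ or $G_2$; the option $G_2$ is excluded because it already admits a balanced strong $2$-partition that extends to an st-partition of the whole $G_{2,5}$, leaving $G_1$, and re-attaching $V_5$ in the unique way consistent with control\ding{173} yields $H_0$. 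Every partition of $H_0$ then has exactly one non-strong piece, so $H_0$ has no st-partition.

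There is no genuine obstacle at this stage: all the difficulty was front-loaded into Section~\ref{3}. The one point to make explicit in the write-up is purely logical — a single bad $G_{2,5}$ suffices to certify $ST(2) > 5$, because the defining property of $ST(2)$ quantifies universally over all regular $2$-balanced $c$-partite tournaments at each level $c \geq ST(2)$, so exhibiting one exceptional tournament at level $5$ rules out $c' = 5$.
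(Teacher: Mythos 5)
Your proposal is correct and matches the paper's reasoning: the paper derives Lemma~\ref{st25} immediately from Theorem~\ref{H0} (the counterexample $H_0$ at level $c=5$), exactly as you do. The only cosmetic remark is that invoking $ST(2)\geq 5$ from Theorem~\ref{st27} is unnecessary, since the universal quantifier in the definition of $ST(2)$ means a single bad $G_{2,5}$ already rules out every $c'\leq 5$.
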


\section{Every regular $2$-balanced $6$-partite tournament has a st-partition}

Given a regular $2$-balanced $6$-partite tournament $G_{2,6}$ with partite sets $V_i = \{v_{i,1}, v_{i,2} \}$ for $i=[6]$, observe that after deleting $V_6$ from $G_{2,6}$, we get a nearly regular $2$-balanced $5$-partite tournament $G_{2,5}$ with partite sets $V_i = \{v_{i,1}, v_{i,2} \}, i=[5]$. The following lemmas are useful in exploring the structure of $G_{2,6}$. 

\begin{lemma}\label{control3}
    For any irregular part $V_i$ in $V(G_{2,5})$ where $i\in [5]$, if $n(V_i) = 4$, then $V_i$  control\ding{173} another partite set $V_j$ or $V_i$ is controlled\ding{173} by $V_6$.
\end{lemma}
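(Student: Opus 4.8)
The plan is to suppose the conclusion fails --- that $V_i$ controls\ding{173} no partite set $V_j$ with $j\in[5]\setminus\{i\}$ and $V_6$ does not control\ding{173} $V_i$ --- and to derive a contradiction. First I unpack what $n(V_i)=4$ means. Writing $A_m$ ($m=1,2$) for the set of partitions $\tau$ of $G_{2,5}$ with $\delta_{T_{v_{i,m},\tau}}(v_{i,m})=0$, we have $|A_m|=n(v_{i,m})\le 2$ and $n(V_i)=|A_1\cup A_2|$, since any partition in $A_1\cup A_2$ is automatically non-strong. Hence $n(V_i)=4$ forces $n(v_{i,1})=n(v_{i,2})=2$ and, crucially, $A_1\cap A_2=\varnothing$. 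Since $v_{i,1},v_{i,2}$ are irregular in $G_{2,5}$ their out-/in-degree differences are even, hence equal to $\pm2$; combining $n(v_{i,m})=2$ with the Observation, each $v_{i,m}$ is good to exactly three of the other four partite sets of $G_{2,5}$, and for the fourth --- call it $V_{C_m}$ --- it either dominates both its vertices or is dominated by both.

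Next I bring in $V_6$. Regularity of $G_{2,6}$ gives that $v_{i,m}$ has exactly two arcs with $V_6$; if $v_{i,m}$ were good to $V_6$ it would become regular after deleting $V_6$, contradicting that $V_i$ is irregular, so each $v_{i,m}$ either dominates both of $V_6$ or is dominated by both of $V_6$. If $v_{i,1}$ and $v_{i,2}$ behave oppositely here, a one-line count yields exactly two arcs from $V_6$ to $V_i$ with $|N^+(v_{i,1})\cap V_6|$ and $|N^+(v_{i,2})\cap V_6|$ both in $\{0,2\}$, i.e. $V_6\stackrel{2}{\to}V_i$, contrary to assumption. Hence $v_{i,1}$ and $v_{i,2}$ behave the same way toward $V_6$, and since reversing all arcs of $G_{2,6}$ preserves regularity, all values $n(\cdot)$, and each of the two alternative conclusions, I may assume $V_i\stackrel{4}{\to}V_6$. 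Then both $v_{i,m}$ lose two out-arcs when $V_6$ is deleted, so they are in-heavy in $G_{2,5}$; in particular $v_{i,m}$ is dominated by both vertices of $V_{C_m}$, and $v_{i,m}$ can never be a source of its tournament in any partition. Thus $A_m$ is exactly the set of partitions in which $v_{i,m}$ is a \emph{sink} of its tournament.

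The crux is to produce a single partition lying in $A_1\cap A_2$. For each partite set $V_k$ to which $V_i$ is good, our standing assumption forces $V_i\stackrel{1}{\to}V_k$ rather than $V_i\stackrel{2}{\to}V_k$, and in a control\ding{172} relationship the two arcs from $V_k$ into $V_i$ reach $v_{i,1}$ and $v_{i,2}$ through \emph{distinct} vertices of $V_k$, which is immediate from the definition of control\ding{172} (there each of $v_{k,1},v_{k,2}$ has a unique out-neighbour in $V_i$, forcing a $4$-cycle on $V_i\cup V_k$). I then build $\tau^\ast$ by placing, for each such $V_k$, the vertex of $V_k$ dominating $v_{i,1}$ on the side of $v_{i,1}$ and the vertex dominating $v_{i,2}$ on the side of $v_{i,2}$; for the one or two remaining partite sets, both vertices of $V_{C_m}$ dominate $v_{i,m}$ so that side imposes no constraint, while if $C_1\ne C_2$ the partner that is good to $V_{C_1}$ (namely $v_{i,2}$) or to $V_{C_2}$ (namely $v_{i,1}$) gets its unique dominator routed to its own side --- a short case split on whether $C_1=C_2$ makes these mutually compatible assignments explicit. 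In $\tau^\ast$ every other vertex on $v_{i,1}$'s side dominates $v_{i,1}$, and likewise for $v_{i,2}$, so $\tau^\ast\in A_1\cap A_2$, contradicting $A_1\cap A_2=\varnothing$.

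I expect the load-bearing step to be the passage from $n(V_i)=4$ to $A_1\cap A_2=\varnothing$, together with the realization that a control\ding{172} relationship (unlike control\ding{173}) is exactly what allows the relevant neighbours of $v_{i,1}$ and $v_{i,2}$ to be sent to opposite sides of a partition. The remaining ingredients --- the arc-reversal normalization and the $C_1=C_2$ versus $C_1\ne C_2$ bookkeeping --- are routine; the only point requiring care is to verify that the side-assignments defining $\tau^\ast$ are mutually consistent across all four of the other partite sets.
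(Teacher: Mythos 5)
Your proposal is correct and takes essentially the same route as the paper: rule out the oppositely-irregular case via $V_6\stackrel{2}{\to}V_i$, normalize the orientation, note that each vertex of $V_i$ is good to exactly three other partite sets, and use the fact that goodness together with the no-control\ding{173} assumption forces control\ding{172}, whose $4$-cycle structure makes the minimum-degree-zero partitions of $v_{i,1}$ and $v_{i,2}$ overlap, contradicting $n(V_i)=4$. Your explicit partition $\tau^\ast\in A_1\cap A_2$ is just a spelled-out version of the paper's observation that $n(V_1)<4$ in its two cases (same three good partite sets versus different ones).
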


\begin{proof}
         
    W.l.o.g., let $V_1$ be an irregular partite set in $G_{2,5}$. If $d^+_{G_{2,5}}(v_{1,1})-d^-_{G_{2,5}}(v_{1,1})=2$ and $d^+_{G_{2,5}}(v_{1,2})-d^-_{G_{2,5}}(v_{1,2})=-2$, we have that $V_1$ is controlled\ding{173} by $V_6$. So, we only need to consider the case that the in-degree( or out-degree) of $v_{1,1}$ and $v_{1,2}$ are $5$. For convenience, we assume that $d^+_{G_{2,5}}(v_{1,1})=d^+_{G_{2,5}}(v_{1,2})=5$. Observe that $n(v_{1,i})=2$ if and only if $v_{1,i}$ is good to exactly three partite sets. If $V_1$ is good to $V_j$, $V_1$ will control\ding{172} or control\ding{173} $V_j,j \in [5]$ and $i\ne j$. Then we consider the following two cases. 

    \textbf{Case 1: $v_{1,1}$ and $v_{1,2}$ are good to the same part.}\\
    If $V_1$ doesn't control\ding{173} any partite set, $V_1$ control\ding{175} a partite set and control\ding{172} other three partite sets. Observe that $n(V_1)$ is $2$ in this case, it's a contradiction.   

    \textbf{Case 2: $v_{1,1}$ and $v_{1,2}$ are good to different partite sets.}\\
    So, $V_1$ control\ding{174} two different partite sets. If $V_1$ doesn't control\ding{173} any partite set, then $V_1$ will control\ding{172} other two partite sets. Observe that $n(V_1)=2$ in this case, it's also a contradiction. 

    Therefore, $v_i$ control\ding{173} another part $V_j$ or $V_i$ is controlled\ding{173} by $V_6$.
    
\end{proof}

\begin{lemma}\label{subG25}
    Let $G_{2,6}$ be a regular 2-balanced $6$-partite tournament. If $G_{2,6}$ has no st-partition, then it contains a $2$-balanced $5$-partite tournament $G_{2,5}$ with no st-partition as a subgraph.
\end{lemma}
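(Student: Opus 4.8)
The plan is to prove the contrapositive: if every $2$-balanced $5$-partite subtournament of $G_{2,6}$ has an st-partition, then $G_{2,6}$ has one. First I would observe that a partite set of such a subtournament is an independent pair of $G_{2,6}$, hence equals some $V_i$, so the $2$-balanced $5$-partite subtournaments of $G_{2,6}$ are precisely the tournaments $G_i:=G_{2,6}\setminus V_i$, $i\in[6]$. Thus suppose, for contradiction, that every $G_i$ has an st-partition while $G_{2,6}$ has none; the goal is to construct an st-partition of $G_{2,6}$.

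The key tool is an extension criterion. Recall the folklore fact that adjoining to a strong tournament $T$ of order $\ge 2$ a new vertex having at least one out-neighbour and one in-neighbour in $T$ again yields a strong tournament, the converse being clear. A maximal tournament of $G_{2,6}$ has exactly one vertex in each partite set, so an st-partition $(T_1,T_2)$ of $G_i$ is extended to a partition of $G_{2,6}$ precisely by placing $v_{i,1},v_{i,2}$ into the two different parts, and there are two such placements. Since $G_{2,6}$ is regular, $d^{+}(v)=d^{-}(v)=5$, and since every vertex of $V_i$ is adjacent to exactly the ten vertices of $T_1\cup T_2$, one has $d^{+}_{T_1}(v)+d^{+}_{T_2}(v)=5=d^{+}_{T_1}(v)+d^{-}_{T_1}(v)$ for $v\in V_i$; hence $v$ has both an out- and an in-neighbour in $T_1$ iff it does in $T_2$, iff $d^{+}_{T_1}(v)\in\{1,2,3,4\}$. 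So both placements give st-partitions of $G_{2,6}$ exactly when each of $v_{i,1},v_{i,2}$ satisfies $d^{+}_{T_1}\in\{1,2,3,4\}$. Consequently, if $(T_1,T_2)$ does not extend, then some $x\in V_i$ has $d^{+}_{T_1}(x)\in\{0,5\}$, i.e. $x$ dominates one of $T_1,T_2$ entirely and is dominated by the other entirely; as each part is a transversal of $\{V_j\}_{j\ne i}$, this forces $N^{+}(x)=V(T_1)$ and $N^{-}(x)=V(T_2)$ (say) to be transversals with $G_{2,6}[N^{+}(x)]$, $G_{2,6}[N^{-}(x)]$ strong, and $x$ good to every $V_j$, $j\ne i$.

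Applying this to $V_6$: $G_6$ has an st-partition, which by assumption cannot extend, so some $x_6\in V_6$ — call it $v_{6,1}$ — has $N^{+}(v_{6,1})=V(T_1)$ and $N^{-}(v_{6,1})=V(T_2)$ for the two parts of this partition, with $T_1,T_2$ strong transversals of $\{V_j\}_{j\le 5}$. I would also record that a source–sink vertex determines the partition: two st-partitions of $G_6$ sharing the source–sink vertex $v_{6,1}$ also share the part $N^{+}(v_{6,1})$, hence coincide. So $G_6$, and by symmetry every $G_i$, has at most two st-partitions, each pinned down by a vertex of $V_i$.

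The heart of the proof — and the step I expect to be the main obstacle — is then to exhibit an explicit st-partition of $G_{2,6}$ from this rigid picture, using in addition that every $G_j$ with $j\le5$ has an st-partition of the same source–sink type. Writing $V_k=\{a_k,b_k\}$ with $a_k\in T_1$, $b_k\in T_2$ for $k\le5$, I would take such an st-partition $(S_1,S_2)$ of $G_5$ with $v_{6,1}\in S_1$; strongness forces $S_1=\{v_{6,1}\}\cup\{a_k:k\in I\}\cup\{b_k:k\in J\}$ for some partition $I\sqcup J=\{1,2,3,4\}$ with $I,J\ne\emptyset$, and then $S_2=\{v_{6,2}\}\cup\{b_k:k\in I\}\cup\{a_k:k\in J\}$ is also strong. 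Re-inserting $V_5=\{a_5,b_5\}$, I would show that one of the two placements of $\{a_5,b_5\}$ into $\{S_1,S_2\}$ keeps both parts strong — here $v_{6,1}$ already has the out-neighbour $a_5$ and the in-neighbour $b_5$ available, $T_1,T_2$ are strong, and the analysis reduces to a short case check on where $a_5,b_5$ send and receive arcs inside $S_1,S_2$, appealing if necessary to the structural lemmas of Section~2 (e.g. Lemma~\ref{control3}) to discard the remaining bad configuration — and this contradicts $G_{2,6}$ having no st-partition. The difficulty is precisely that the arcs among the $a_k$'s, the $b_k$'s and the two vertices of $V_6$ are not pinned down by the degree data alone, so producing this st-partition (or, in a dead-end sub-case, a degree contradiction) requires a careful, somewhat lengthy tracking of these cross-arcs.
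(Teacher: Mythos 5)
Your opening moves are sound and coincide with the paper's: the observation that the $2$-balanced $5$-partite subtournaments are exactly the $G_i=G_{2,6}\setminus V_i$, the extension criterion showing that an st-partition of $G_i$ fails to extend only when some $x\in V_i$ has $N^+(x)$ equal to one part and $N^-(x)$ equal to the other (so $x$ is good to every other partite set), and the consequence that each $G_i$ has at most two st-partitions, each pinned to a vertex of $V_i$ (this is in substance the paper's Claim~\ref{claimstG26}). Up to that point you have reproduced the paper's setup.

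The gap is in what you call the heart of the proof. You assert that, after taking an st-partition $(S_1,S_2)$ of $G_5$ with $v_{6,1}\in S_1$, ``one of the two placements of $\{a_5,b_5\}$ keeps both parts strong'' up to a short case check. But by your own extension criterion this is exactly the statement that needs proving: the insertion of $V_5$ is blocked precisely when $V_5$ too contains a source--sink vertex for $(S_1,S_2)$, i.e.\ when the same obstruction that you used for $V_6$ recurs, and nothing in the degree data or in the structure you have recorded so far excludes this. Ruling out the global configuration in which \emph{every} st-partition of \emph{every} $G_i$ is blocked by such a vertex is the actual content of the lemma, and your sketch does not supply it; the appeal to Lemma~\ref{control3} does not help as stated, since that lemma lives inside the $n(\cdot)$-counting framework for a fixed nearly regular $G_{2,5}$ with $n(V_i)=4$, which you have not set up here. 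The paper closes this gap quite differently: from the forced good vertices it deduces that only the relations control\ding{172} and control\ding{174} can occur between partite sets, then uses the quantitative form of Claim~\ref{claimstG26} ($\sum_{i\neq\ell} n_{G_{2,5}}(V_i)\geq 14$ for every deleted $V_\ell$, with goodness of $V_\ell$ in the equality case) together with a case analysis on chains of control\ding{174} relations forced by regularity (via co-neighborhoods) to reach a numerical contradiction in every case. Some replacement for that counting-plus-case analysis is needed; as written, your final step is an unproven claim rather than a reduction to routine checking.
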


\begin{proof}
     Assume the contrary that for any $2$-balanced 5-partite tournament $G_{2,5}$ in $G_{2,6}$, there exists a st-partition of $G_{2,5}$. We claim that the relations between any two partite sets are control\ding{172} or control\ding{174} in $G_{2,6}$. Let $G_i$ be the $2$-balanced 5-partite tournament we get after deleting $V_i$ from $G_{2,6}$. By assumption, $G_i$ has a st-partition into $T_1$ and $T_2$. Then there exists a vertex $u \in V_i$ such that $N^{+}_{G_{2,6}}(u) = V(T_1), N^{-}_{G_{2,6}}(u) = V(T_2)$ or $N^{+}_{G_{2,6}}(u) = V(T_2), N^{-}_{G_{2,6}}(u) = V(T_1)$. Otherwise we can add $v_{i,1}$ to $T_1$ and $v_{i,2}$ to $T_2$, then we get a st-partition of $G_{2,6}$, a contradiction. Note that such vertex $u \in V_i$ is a vertex that is good to each other partite set in $G_{2,6}$, we call such vertex a good vertex; and if both vertices of $V_i$ are good vertices, we call $V_i$ a good partite set. Then for every partite set, there is a good vertex. Thus, a partite set neither control\ding{173} nor control\ding{175} other partite sets, otherwise we can not find a good vertex in each partite set. Moreover, a partite set can control\ding{174} at most two partite sets. Now, we need the following claim.

     \begin{claim}\label{claimstG26}
         Let $G_{2,5} := G_{2,6} \setminus V_{\ell}$, then $\sum_{i\neq \ell} n_{G_{2,5}}(V_i) \geq 14$. Moreover, if $\sum_{i \neq \ell} n_{G_{2,5}}(V_i) =14$ and $G_{2,6}$ has no st-partition, then $V_{\ell}$ is good to each other partite set in $G_{2,6}$.
     \end{claim}

     \begin{proof}[Proof of Claim \ref{claimstG26}]
         Suppose that $\tau$ is a st-partition of $G_{2,5}$ and $\tau$ partitions $G_{2,5}$ into $T^{\tau}_1$ and $T^{\tau}_2$. Since $G_{2,6}$ has no st-partition, then there is vertex $u \in V_{\ell}$ such that $N^{+}(u) = V(T^{\tau}_1), N^{-}(u) = V(T^{\tau}_2)$ or $N^{-}(u) = V(T^{\tau}_1), N^{+}(u) = V(T^{\tau}_2)$. This means that $u$ is a good vertex. Now assume that $\sum_{i\neq \ell} n_{G_{2,5}}(V_i) \leq 13$, then there are at least three st-partitions of $G_{2,5}$. This implies that there are at least three different vertices in $V_{\ell}$ since for different st-partitions $\tau$ and $\tau'$, we have $V(T^{\tau}_1) \neq V(T^{\tau'}_1), V(T^{\tau}_2) \neq V(T^{\tau'}_2)$ and $V(T^{\tau}_1) \neq V(T^{\tau'}_2)$. While $|V_{\ell}| = 2$, and so it is a contradiction. Thus, $\sum_{i\neq \ell} n_{G_{2,5}}(V_i) \geq 14$. And if $\sum_{i\neq \ell} n_{G_{2,5}}(V_i) = 14$, then there are two st-partitions of $G_{2,5}$. So both two vertices of $V_{\ell}$ are good vertices. 
     \end{proof}
   Then, we consider the following two cases:
   
     \textbf{Case 1: There are no partite sets $V_i$ and $V_j$ such that $V_i \stackrel{3}{\to} V_j$ in $G_{2,6}$.}\\
     Let $G := D[\bigcup_{1\leq i \leq 5} V_i]$, then $G$ is a regular $2$-balanced $5$-partite tournament. Note that $n(V_i) = 2$ for all $i \in [5]$ since for any two partite sets $V_1$ and $V_2$, we have $V_i \stackrel{1}{\to} V_j$. Then $\sum_{i=1}^5 n_{G}(V_i) = 10$, a contradiction to Claim \ref{claimstG26}.

     \textbf{Case 2: There are partite sets $V_1$ and $V_2$ such that $V_1 \stackrel{3}{\to} V_2$ in $G_{2,6}$.}\\
     Let $D$ be a digraph. Now we define the \emph{co-out-neighborhood} of a vertex subset $V_0$ is $N_{co}^{+}(V_0) = \cap_{v\in V_0} N^{+}(v)$, and the \emph{co-in-neighborhood} of a vertex subset $V_0$ is $N_{co}^{-}(V_0) = \cap_{v\in V_0} N^{-}(v)$. 
     
     As $V_1$ is regular, $V_1$ is controlled\ding{174} by another partite set, say $V_3$, such that $N^{-}_{co}(V_2) \cap V_1 = N^{+}_{co}(V_3) \cap V_3$. And as $V_3$ is regular, $V_3$ is controlled\ding{174} by another partite set $V_j, j\neq 1,3$, such that $N^{-}_{co}(V_1) \cap V_3 = N^{+}_{co}(V_j) \cap V_3$. 
     
     \textbf{Subcase 2.1: $j = 2$.}\\
     If $N^{-}_{co}(V_3) \cap V_2 \neq N^{+}_{co}(V_1) \cap V_2$, then as $V_2$ is regular in $G_{2,6}$, there must be another two partite sets $V_\ell$ and $V_{\ell'}$ such that $V_2$ is controlled\ding{174} by $V_\ell$ and $V_{2} $ control\ding{174} $V_{\ell'}$. Besides, $N^{-}_{co}(V_3) \cap V_2 = N^{+}_{co}(V_\ell) \cap V_2$ and $N^{-}_{co}(V_{\ell'}) \cap V_2 = N^{+}_{co}(V_1) \cap V_2$. Then there remains a partite set $V_k$ such that $k\not\in \{1,2,3, \ell, \ell'\}$. Let $G := G_{2,6} \setminus V_k$, then $n_G(V_1) = 0, n_G(V_2) \leq 2$ and $n_G(V_3) \leq 2$. As a result, $\sum_{i\neq k}n_G(V_i) \leq 0 + 2\times 2 + 2\times 4 = 12$, a contradiction to Claim \ref{claimstG26}. Now we consider the case when $N^{-}_{co}(V_3) \cap V_2 = N^{+}_{co}(V_1) \cap V_2$. If for any $V_j ,j \in \{4,5,6\}$, $V_j$ does not control\ding{174} any other partite sets, let $G := D[\bigcup_{1\leq i \leq 5} V_i]$. Then $n_G (V_i) \leq 2$ for all $i \in [5]$. As a result, $\sum_{i}^5 n_G(V_i) \leq 10$, a contradiction to Claim \ref{claimstG26}. If there is a partite set $V_k ,k \in \{4,5,6\}$ such that $V_k$ control\ding{174} some partite set $V_\ell, \ell \neq k$, let $G := G_{2,6} \setminus V_k$, then $n_G(V_1) \leq 2, n_G(V_2) \leq 2$ and $n_G(V_3) \leq 2$. So $\sum_{i \neq k} n_G(V_i) \leq 14$. Observe that $V_k$ is not a good partite set in $G_{2,6}$ since $V_k$ control\ding{174} $V_\ell$. This contradicts to Claim \ref{claimstG26}.

     \textbf{Subcase 2.2. $j \geq 4$.}\\ W.l.o.g., let $j = 4$. Again, because $V_4$ is regular, $V_4$ is controlled\ding{174} by another partite set $V_{\ell}, \ell \neq 3$, such that $N^{-}_{co}(V_1) \cap V_3 = N^{+}_{co}(V_j) \cap V_3$. If $\ell \neq 2$, let $G := G_{2,6} \setminus V_2$. Then $n_G (V_1),n_G (V_3), n_G (V_4) \leq 2$, and so $\sum_{i \neq 2} n_G(V_i) \leq 3 \times 2 + 2 \times 4 = 14$. Observe that $V_2$ is not good in $G_{2,6}$, since $V_2$ is controlled\ding{174} by $V_1$. It is a contradiction to Claim \ref{claimstG26}. Now it remains to consider the case when $\ell = 2$. If $N^{-}_{co}(V_4) \cap V_2 = N^{+}_{co}(V_1) \cap V_2$, let $G := D[\bigcup_{1\leq i \leq 5} V_i]$. Then $n_G(V_i) \leq 2$ for all $i \in [4]$ and $n_G(V_5) \leq 4$. Therefore $\sum_{i=1}^5 n_G(V_i) \leq 12$, a contradiction to Claim \ref{claimstG26}.
\end{proof}

\begin{lemma}\label{G26structure}
    Let $G_{2,6}$ be a regular $2$-balanced $6$-partite tournament. If $G_{2,6}$ has no st-partition, there are two partite sets $V_{i},V_{j}$ in $G_{2,6}$ such that $V_i$  control\ding{173} $V_j$.
\end{lemma}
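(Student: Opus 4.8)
The plan is to argue by contradiction: assume $G_{2,6}$ has no st-partition but no partite set control\ding{173} any other. By Lemma \ref{subG25}, $G_{2,6}$ contains a $2$-balanced $5$-partite tournament $G_{2,5}$ with no st-partition; say $G_{2,5} = G_{2,6}\setminus V_6$. By Theorem \ref{st 5} applied to this $G_{2,5}$, there is a partite set among $V_1,\dots,V_5$, say $V_5$, with $V_j \stackrel{2}{\to} V_5$ for all $j\in[4]$, and the remaining four partite sets $V_1,\dots,V_4$ control\ding{172} each other. In particular $V_1 \stackrel{2}{\to} V_5$ holds \emph{inside $G_{2,5}$}, hence also in $G_{2,6}$, which immediately contradicts the assumption that no control\ding{173} relation exists in $G_{2,6}$.

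Wait — I should double-check that control\ding{173} is a relation internal to a pair of partite sets, so that it is inherited from $G_{2,5}$ to $G_{2,6}$. Indeed, $V_j \stackrel{2}{\to} V_i$ depends only on the arcs between $V_i$ and $V_j$, which are unchanged when we pass from the induced subgraph $G_{2,5}$ to $G_{2,6}$. So the argument is in fact very short: Lemma \ref{subG25} hands us a no-st-partition $G_{2,5}$, Theorem \ref{st 5} forces a control\ding{173} relation among its partite sets, and that relation persists in $G_{2,6}$. Thus two partite sets $V_i, V_j$ of $G_{2,6}$ satisfy $V_i \stackrel{2}{\to} V_j$ (after renaming, with $V_5$ playing the role of the dominated set), which is exactly the conclusion.

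The only subtlety I anticipate — and the step I would write out most carefully — is confirming that the hypotheses of Theorem \ref{st 5} are met, namely that the $G_{2,5}$ produced by Lemma \ref{subG25} is regular. Deleting $V_6$ from a regular $G_{2,6}$ leaves a $2$-balanced $5$-partite tournament that is only guaranteed to be nearly regular in general; but Theorem \ref{st 5} as stated requires the $G_{2,5}$ to be regular. I would resolve this either by checking that Lemma \ref{subG25}'s conclusion can be strengthened to a regular subtournament, or — more robustly — by invoking Lemma \ref{control3} and Lemma \ref{lemma rn4} directly on the nearly regular $G_{2,5}$: since $G_{2,5}$ has no st-partition, $\sum_{i=1}^5 n(V_i)\ge 16$, so some $V_j$ has $n(V_j)=4$; if $V_j$ is (semi-)regular, Lemma \ref{lemma rn4} gives a control\ding{173} relation out of $V_j$, and if $V_j$ is irregular, Lemma \ref{control3} gives either a control\ding{173} relation out of $V_j$ (inside $G_{2,5}$, hence in $G_{2,6}$) or $V_j$ is controlled\ding{173} by $V_6$ (which is a control\ding{173} relation in $G_{2,6}$). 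Either way we obtain two partite sets of $G_{2,6}$ with a control\ding{173} relation, completing the proof.
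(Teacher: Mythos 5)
Your ``robust'' fallback is exactly the paper's proof: from Lemma \ref{subG25} one gets a $G_{2,5}$ with no st-partition, so $\sum_{i=1}^5 n(V_i)\ge 16$ forces some $n(V_j)=4$ (since each $n(V_i)\le 4$ and $5\cdot 3<16$), and then Lemma \ref{lemma rn4} (for $V_j$ (semi-)regular) or Lemma \ref{control3} (for $V_j$ irregular) yields a control\ding{173} relation in $G_{2,6}$. Your first route via Theorem \ref{st 5} would indeed not work, for precisely the reason you flagged --- the deleted-partite-set subtournament is only nearly regular, not regular --- so the correct and complete argument is the fallback, which coincides with the paper's.
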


\begin{proof}
    By lemma \ref{subG25}, there is a $2$-balanced $5$-partite graph $G_{2,5} \subseteq G_{2,6}$ and this $G_{2,5}$ has no st-partition. Thus, the sum of $n(V_i)$ over all partite sets $V_i$ of $G_{2,5}$ is at least $16$. By the Pigeon Hole Principle, there is a partite set $V_i$ of $G_{2,5}$ with $n(V_i) = 4$. Then by Lemma \ref{lemma rn4} and Lemma \ref{control3}, we get the desired result.
\end{proof}

\begin{theorem}\label{st26}
    There is no regular $2$-balanced $6$-partite tournament with no st-partition.
\end{theorem}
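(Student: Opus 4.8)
The plan is to argue by contradiction: assume $G_{2,6}$ is a regular $2$-balanced $6$-partite tournament with no st-partition, and produce an st-partition of it. By Lemma~\ref{subG25} we may delete one part --- relabel so that it is $V_6$ --- obtaining a nearly regular $2$-balanced $5$-partite tournament $G := G_{2,6}\setminus V_6$ that still has no st-partition; in particular $\sum_{i=1}^{5} n_G(V_i)\ge 16$, so some part of $G$ has $n_G$-value $4$. On the other hand, Lemma~\ref{G26structure} supplies a control\ding{173} relation $V_a\stackrel{2}{\to}V_b$ in $G_{2,6}$; arranging (by a short case analysis on where $a,b$ sit, re-choosing the deleted part if necessary) that this relation lies inside $G$, Lemma~\ref{be controlled} --- applied via its regular-part case, or via its case on being controlled\ding{173} by two parts --- forces $n_G(V_b)=0$. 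Since the five values $n_G(V_i)$ are each at most $4$ and sum to at least $16$, the remaining four parts of $G$ all have $n_G$-value $4$.

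The core of the proof is to pin down $G$ up to isomorphism. A part with $n_G=4$ has both of its vertices good to (almost) all other parts, so the four parts with $n_G=4$ pairwise control\ding{172} or control\ding{173} one another and each control\ding{173} the distinguished part $V_b$; adapting the counting argument of Theorem~\ref{st 5} shows that in fact $V_b$ is controlled\ding{173} by all four of them, which in turn control\ding{172} each other. By Lemma~\ref{g1g2} the induced $2$-balanced $4$-partite tournament on the four parts is isomorphic to $G_1$ or $G_2$, and $G_2$ is ruled out exactly as in the proof of Theorem~\ref{H0}; hence $G\cong H_0$. Here the near-regularity of $G$, inherited from the regularity of $G_{2,6}$, is what rules out the unbalanced control\ding{173} configurations into $V_b$ that would otherwise make $G$ irregular. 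Re-inserting $V_6$ now: every vertex of $H_0$ must become regular in $G_{2,6}$, so each of the five parts of $H_0$ is good to $V_6$, i.e.\ control\ding{172} $V_6$ or control\ding{173} $V_6$; together with the requirement that $v_{6,1}$ and $v_{6,2}$ each have out-degree and in-degree $5$ in $G_{2,6}$, this leaves only a short, explicitly enumerable list of tournaments $G_{2,6}$ up to isomorphism.

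To conclude, for each $G_{2,6}$ on that list I would exhibit a concrete partition of its twelve vertices into two tournaments of order $6$ and check directly that both halves are strongly connected --- contradicting the assumption. (The at least $12$ good partitions of $G$ provided by Lemma~\ref{good partition}, each lifting in exactly two ways to a partition of $G_{2,6}$ with neither lift an st-partition, are a useful bookkeeping aid both for narrowing the list and for guessing the right partition in each case.) I expect this final verification to be the main obstacle, and for a structural reason: by Lemma~\ref{folk} a non-strong tournament of order $5$ always contains a vertex of out- or in-degree $0$, so non-strongness is exactly what the $n(\cdot)$-counting detects; but a non-strong tournament of order $6$ need not --- two vertex-disjoint $3$-cycles with all arcs directed from one to the other is non-strong with minimum degree $1$ --- so strong connectivity of each candidate half cannot be read off from the counting machinery and has to be verified directly, for instance by explicitly excluding a dominating proper subset in each half.
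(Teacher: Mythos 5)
Your high-level plan coincides with the paper's (delete one part, force the remaining $5$-partite tournament to be $H_0$, then exhibit an explicit st-partition of $G_{2,6}$), but there is a genuine gap at the central step, and it is precisely the step where the paper does most of its work. You reduce to $G:=G_{2,6}\setminus V_6$ and then argue with the regular-case machinery: $V_a\stackrel{2}{\to}V_b$ inside $G$ gives $n_G(V_b)=0$ via Lemma~\ref{be controlled}, hence the other four parts have $n_G=4$, hence (adapting Theorem~\ref{st 5}) they control\ding{172} each other and control\ding{173} $V_b$. But $G$ is only \emph{nearly} regular, and the strong conclusions you invoke require regular parts: Lemma~\ref{be controlled} gives $n_G(V_b)=0$ only if $V_b$ is regular in $G$ or is controlled\ding{173} by two parts, and nothing in your set-up guarantees either; likewise a part with $n_G=4$ that is irregular in $G$ need not control\ding{173} anything inside $G$ at all (Lemma~\ref{control3} explicitly allows the alternative that it is controlled\ding{173} by the deleted part $V_6$), so the chain ``$n_G=4\Rightarrow$ controls\ding{173} something $\Rightarrow$ that part has $n_G=0$'' breaks. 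There is also a problem with which part you delete: Lemma~\ref{subG25} hands you one specific $5$-partite subtournament with no st-partition, and you may not ``re-choose the deleted part'' to make the control\ding{173} pair of Lemma~\ref{G26structure} land inside $G$, since for other choices of deleted part the remaining $G_{2,5}$ may well admit an st-partition and the inequality $\sum_i n_G(V_i)\ge 16$ evaporates. The paper avoids both issues by deleting the \emph{controlled\ding{173}} part $V_6$ supplied by Lemma~\ref{G26structure} and proving directly, from the arc structure at $V_6$ (each vertex of $V_6$ has a part it dominates and a part dominating it), that this particular $G_{2,5}$ has no st-partition via a lifting argument.

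What your sketch omits, and what cannot be waved away by ``near-regularity rules out the unbalanced configurations,'' is the refined counting the paper then needs: the weight $n'(V_i)=n_1(V_i)+\tfrac12 n_2(V_i)$, the lower bound $\sum n'\ge 16$, and above all the claim that in every good partition of $G_{2,5}$ the unique vertex of minimum degree $0$ is \emph{regular} in $G_{2,5}$ --- proved by a substantial case analysis that repeatedly lifts partitions of $G_{2,5}$ to st-partitions of $G_{2,6}$ using the arcs to and from $V_6$. Only with that claim do the bounds $n'\le 3$ for semi-regular parts and $n'\le 2$ for irregular parts (Facts~\ref{fact 2}--\ref{fact 4}) combine, through a four-case analysis, to force $\sum n'=16$ and hence the $H_0$ structure. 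Your endgame (every part of $H_0$ must be good to $V_6$ by regularity of $G_{2,6}$, then check an explicit partition into two strong $6$-tournaments, which indeed must be verified directly since Lemma~\ref{folk} gives nothing useful at order $6$) does match the paper's conclusion, but as it stands the passage from ``$G_{2,6}$ has no st-partition'' to ``$G\cong H_0$'' is asserted rather than proved.
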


\begin{proof}
    Assume the contrary that there is a regular $2$-balanced $6$-tournament $G_{2,6}$ with no st-partition. By Lemma \ref{G26structure}, there is a partite set, say $V_5$, control\ding{173} another partite set, say $V_6$. Then, there exists a vertex $u \in V_6$ such that $v_{5,1}\to u$ and $v_{5,2}\to u$. W.l.o.g., let $u$ be $v_{6,1}$. Since $v_{6,1}$ is regular in $G_{2,6}$, there is another partite set $V_{j}, j\neq 5,6$ such that $v_{6,1}\to v_{j,1}$ and $v_{6,1}\to v_{j,2}$.

    For the other vertex $v_{6,2} \in V_6$, we have $v_{6,2} \to v_{5,1}$ and $v_{6,2} \to v_{5,2}$. As $v_{6,2}$ is regular, there is another partite set $V_{j'}, j'\neq 5,6$ such that $v_{j',1} \to v_{6,2}$ and $v_{j',2} \to v_{6,2}$. Deleting $V_6$ from $G_{2,6}$, we get a nearly regular $2$-balanced $5$-tournament, denoted by $G_{2,5}$. Suppose $\tau$ is a st-partition of $G_{2,5}$, then $\tau$ partitions $G_{2,5}$ into two maximal tournaments, denoted by $T^{\tau}_1$ and $T^{\tau}_2$ respectively. Note that $G_{2,5}$ has no st-partiton, otherwise $V(G_{2,6}) = W_1 \cup W_2$, in which $W_1 = \{v_{6,1}\} \cup V(T^{\tau}_1)$ and $W_2 = \{v_{6,2}\} \cup V(T^{\tau}_2)$, would be a st-partiton for $G_{2,6}$.

    By Lemma \ref{good partition}, there are at least $12$ good partitions for $G_{2,5}$. For every good partition $\tau$ for $G_{2,5}$, $\tau$ partitions $G_{2,5}$ into one strong maximal tournament, denoted by $T^{\tau}_1$, and one non-strong maximal tournament, denoted by $T^{\tau}_2$, with exactly one vertex $v_1 \in V(T^{\tau}_2)$ having minimum degree $0$ in $V(T^{\tau}_2)$.

    \begin{claim}\label{regularclaim}
        Such vertex $v_1$ is regular in $G_{2,5}$.
    \end{claim}

    \begin{proof}[Proof of Claim \ref{regularclaim}]
        Assume the contrary that $v_1$ is irregular in $G_{2,5}$, w.l.o.g., let $d^{+}_{T^{\tau}_2}(v_1) = 4$ and name the other four vertices in $T^{\tau}_2$ as $v_2, v_3, v_4$ and $v_5$ respectively. As $v_1$ is regular in $G_{2,6}$, we have that $v_{6,1} \to v_1$ and $v_{6,2} \to v_1$. Since $V_6$ is controlled\ding{173} by $V_5$, neither $v_{6,1}$ nor $v_{6,2}$ is good in $G_{2,6}$. Since $v_{5,1} \to v_{6,1}$ and $v_{5,2} \to v_{6,1}$, there is a vertex $V(T_2^{\tau})$, say $v_2$, such that $v_2 \to v_{6,1}$. 
        If there is a vertex in $V(T^{\tau}_2 \setminus \{v_1, v_2\})$, say $v_3$, such that $v_3 \to v_2$. Then there is a cycle $v_1 v_2 v_{6,1}$. Suppose $D[v_3 v_4 v_5]$ forms a cycle. Note that $v_{1} \to v_5$ and $v_3 \to v_2$. Then $D[v_{6,1} \cup V(T^{\tau}_2)]$ is a strong tournament. Recall that $T_{1}^{\tau}$ is strong, $D[v_{6,2} \cup V(T^{\tau}_1)]$ is also strong. This yields a st-partition of $G_{2,6}$, a contradiction. Suppose $D[v_3 v_4 v_5]$ forms a transitive triangle and let $u$ and $u'$ be the sink and source of $D[v_3 v_4 v_5]$ respectively. Since $\tau$ is a good partition, then $\delta_{T^{\tau}_2}(v_i) \geq 1$ for $2 \leq i \leq 5$. Then there is an arc from $u$ to $\{v_1, v_2, v_{6,1} \}$. Together with $v_1 \to u'$, we have $D[v_{6,1} \cup V(T^{\tau}_2)]$ is a strong tournament which will lead to that $G_{2,6}$ has a st-partition as before.
        As a result, there is no vertex $v \in \{v_3, v_4, v_5\}$ such that $v \to v_2$. Since $\delta_{T^{\tau}_2}(v_i) \geq 1$ for all $2\leq i \leq 5$, then $v_3 v_4 v_5$ would be a cycle in this case. And note that $v_{6,1} v_1 v_2$ is also a cycle. Then all arcs between these two cycles should be from $\{v_3, v_4, v_5 \}$ to $\{v_{6,1}, v_1, v_2\}$, otherwise $D[v_{6,1} \cup V(T^{\tau}_2)]$ is a strong tournament which will lead to that $G_{2,6}$ has a st-partition as before. Then consider the neighborhood of $v_{6,1}$, we have $v_{6,1} \to v_1, v_2 \to v_{6,1}, v_{6,1} \to v_3, v_{6,1} \to v_4$ and $v_{6,1} \to v_5$, as shown in Figure \ref{4in1out}. By similar arguments for $v_{6,2}$, we have $v_{6,2} \to v_1, v_2 \to v_{6,2}, v_{6,2} \to v_3, v_{6,2} \to v_4$ and $v_{6,2} \to v_5$, as shown in Figure \ref{4in1out}. This means that for any partite set $V_i, i\in [5]$, there is a vertex $u \in V_i$ such that $N^{+}(u) \supseteq V_6$ and $N^{-}(u) \supseteq V_6$. Then no partite set $V_i ,i\in [5]$ control\ding{173} $V_6$, otherwise both $v_{i,1}$ and $v_{i,2}$ are good to $V_6$. This contradicts the fact that $V_6$ is controlled\ding{173} by another partite set.

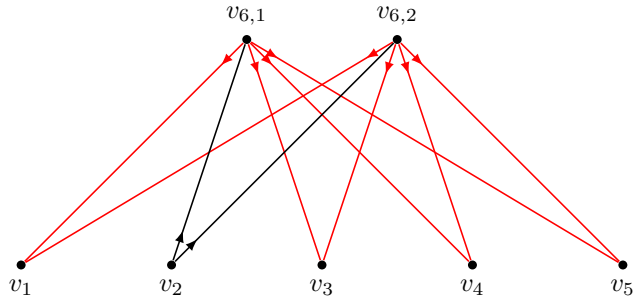
\begin{figure}[!htpb]
    \centering
    \begin{tikzpicture}
    [
    corner/.style={ 
        circle,
        fill,   
        inner sep=1.2pt},	
    arrowline/.style={
        line width=0.6pt,
        postaction = decorate,
        decoration = {markings,
            mark = at position 12pt with \arrow{latex}
        }
    },
    ]

    \node[corner,
    label = {above:$v_{6,1}$}] (v61) at (-1,3){} ;
    
    \node[corner,
    label = {above:$v_{6,2}$}] (v62) at (1,3){} ;
    
    \node[corner,
    label = {below:$v_1$}] (v1) at (-4,0){} ;

    \node[corner,
    label = {below:$v_2$}] (v2) at (-2,0){} ;

    \node[corner,
    label = {below:$v_3$}] (v3) at (0,0){} ;

    \node[corner,
    label = {below:$v_4$}] (v4) at (2,0){} ;

    \node[corner,
    label = {below:$v_5$}] (v5) at (4,0){} ;

    \draw [arrowline, red] (v61) to (v1);
    \draw [arrowline, red] (v61) to (v3);
    \draw [arrowline, red] (v61) to (v4);
    \draw [arrowline, red] (v61) to (v5);
    
    \draw [arrowline, red] (v62) to (v1);
    \draw [arrowline, red] (v62) to (v3);
    \draw [arrowline, red] (v62) to (v4);
    \draw [arrowline, red] (v62) to (v5);
    
    \draw [arrowline] (v2) to (v61);
    \draw [arrowline] (v2) to (v62);
    \end{tikzpicture}
    \caption{Arcs between $V_6$ and $V(T^{\tau}_2)$}
    \label{4in1out}
    \label{fig:1}
\end{figure}
    \end{proof}

Now, we modify the function $n(\cdot)$ to $n'(\cdot)$ to obtain a better approximation of the number of non-strong partitions $N_2$ for $G_{2,5}$. First, we classify the non-strong partitions of $G_{2,5}$ into two categories. Let $\tau$ be a non-strong partition of $G_{2,5}$. For a fixed $V_i$, we say $\tau$ is a \emph{useful partition} if either there is a regular vertex $u\in V_i$ and $\delta_{T_{u,\tau}}(u)=0$ or if $V_i$ is irregular and $\delta_{T_{v_{i,1}, \tau}}(v_{i,1}) = \delta_{T_{v_{i,2}, \tau}}(v_{i,2}) = 0$;
we say $\tau$ is a \emph{useless partition} if there is exactly one vertex $v\in V_i$ such that $\delta_{T_{v,\tau}}(v)=0$ and $v$ is irregular. Let $n_1(V_i)$ be the number of useful partitions for $V_i$, and $n_2(V_i)$ be the number of useless partitions for $V_i$. Note that $n(V_i) = n_1(V_i) + n_2(V_i)$. Define $n'(V_i) = n_1(V_i) + \frac{1}{2}n_2(V_i)$.

\begin{claim}\label{N2upper}
    $\sum_{i=1}^5 n'(V_i) \geq 16$.
\end{claim}

\begin{proof}[Proof of Claim \ref{N2upper}]
 We prove this claim by contradiction. Suppose that $\sum_{i=1}^5 n'(V_i)<16$. Then, there is a non-strong partition $\tau$ such that $\tau$ is not a useful partition for any $V_i$, $i\in [5]$. In addition, $\tau$ is a useless partition for exactly one partite set, say $V_j$. Now, we have that $\tau$ is a good partition. However, this contradicts Claim~\ref{regularclaim}. 
\end{proof}

Note that $n'(V_i) \leq n(V_i)$. Then by Lemma \ref{lemma rn4} and Lemma \ref{be controlled}, we can derive the following four facts about $n'(V_i)$. 

\begin{fact}\label{fact 1}
    For any regular partite set $V_i$ in $G_{2,5}$, $n'(V_i)\leq 4$.  And if $n'(V_1)=4$, then $V_i$ control\ding{173} another partite set $V_j$.
\end{fact}

\begin{fact}\label{fact 2}
    For any semi-regular partite set $V_i$ in $G_{2,5}$, $n'(V_i)\leq 3$.  And if $n'(V_1)=3$, then $V_i$ control\ding{173} another partite set $V_j$.
\end{fact}

\begin{fact}\label{fact 3}
    For any irregular partite set $V_i$ in $G_{2,5}$, $n'(V_i)\leq 2$. 
\end{fact}

\begin{fact}\label{fact 4}
     If a partite set $V_i$ in $G_{2,5}$ is controlled\ding{173} by two other partite sets, then $n'(V_i) = 0$. In particular, if a regular partite set $V_i$ in $G_{2,5}$ is controlled\ding{173} by another partite set, then $n'(V_i) = 0$. And if a semi-regular partite set $V_i$ in $G_{2,5}$ is controlled\ding{173} by another partite set, then $n'(V_i) \leq 1$.
\end{fact}

Next, we give an upper bound on $\sum_{i = 1}^5 n'(V_i)$.

\begin{claim}\label{lastlemma}
    $\sum^5_{i=1}n'(V_i)\leq 16$
\end{claim}

\begin{proof}[Proof of Claim \ref{lastlemma}]
By above facts, we have that if $n'(V_i)=4$, then $V_i$ is regular and $V_i$ control\ding{173} $V_j$ for some $j$.

\textbf{Case 1: there is only one partite set $V_1$ such that $n'(V_1)=4$.}\\
So, $V_1$ is regular and $V_1$ control\ding{173} $V_j$. By Fact~\ref{fact 3} and Fact~\ref{fact 4}, we have $n'(V_j)\leq 2$. So $\sum^5_{i=1}n'(V_i)\leq 4+3+3+3+n'(V_j)<16$.

\textbf{Case 2: there are two partite sets $V_1,V_2$ such that $n'(V_1)=n'(V_2)=4$.}\\ So, $V_1$ and $V_2$ control\ding{172} each other. If $V_1$ and $V_2$ control\ding{173} the same partite set $V_j$, then by Fact \ref{fact 3}, $n'(V_j)=0$.   So $\sum^5_{i=1}n'(V_i)\leq 4+4+3+3+0<16$. 
 If $V_1$ and $V_2$ control\ding{173} different partite sets $V_k,V_l$, then $n'(V_k)\leq 2$ and $n'(V_l)\leq 2$. So, $\sum^5_{i=1}n'(V_i)\leq 4+4+2+2+3<16$.

\textbf{Case 3: there are three partite sets $V_1,V_2,V_3$ such that $n'(V_1)=n'(V_2)=n'(V_3)=4$.}\\ By Fact \ref{fact 1}, $V_1,V_2,V_3$ control\ding{172} each other and all of them will control\ding{173} a partite set. So, there is a partite set $V_j$ being controlled\ding{173} by two different partite sets. By Fact \ref{fact 3}, we have $n'(V_j)=0$. So  $\sum^5_{i=1}n'(V_i)\leq 4+4+4+3+0<16$.

\textbf{Case 4: there are four partite sets $V_1,V_2,V_3,V_4$ such that $n'(V_1)=n'(V_2)=n'(V_3)=n'(V_4)=4$.}\\ In this case, $V_1,V_2,V_3,V_4$ control\ding{172} each other and all of them control\ding{173} $V_5$. Then $\sum_{i=1}^5 n'(V_i) = 16$.  
\end{proof}

Combine the lower bound and upper bound on $\sum_{i = 1}^5 n'(V_i)$, we have $\sum_{i = 1}^5 n'(V_i) = 16$. In this case, $V_1,V_2,V_3,V_4$ control\ding{172} each other and all of them control\ding{173} $V_5$. Note that this structure is the same as the one shown in Theorem \ref{st 5}. By similar arguments in Section \ref{3}, we have $G_{2,5}$ is isomorphic to $H_0$ which is constructed in Theorem \ref{H0}.

Then $G_{2,5}$ is a regular $2$-balanced $5$-partite tournament. While $G_{2,6}$ is also regular, then for any vertex $u \in V(G_{2,5})$, $u$ is good to $V_6$. Recall that for any vertex $v \in V_6$, we can find $i, j \in [5]$ such that $V_i \subseteq N^{+}(v)$ and $V_j \subseteq N^{-}(v)$.

However, we could provide a st-partition of such $G_{2,6}$ as follows. Now, we change the labels of the vertices of $G_{2,5}$ so that they match those of $H_0$. Let $N^+(v_{4,1})\cap V_6 = \{v_{6,1}\}$. Then both $D[\{v_{i,1}: i\in [6]\}]$ and $D[\{v_{i,2}: i\in [6]\}]$ are strong maximal tournaments and so it is a st-partition.

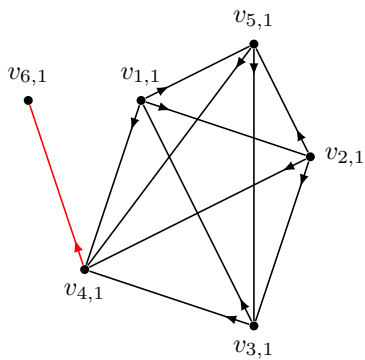
\begin{figure}[h]

    \centering
    \begin{tikzpicture}
    [
    corner/.style={ 
        circle,
        fill,   
        inner sep=1.2pt},	
    arrowline/.style={
        line width=0.6pt,
        postaction = decorate,
        decoration = {markings,
            mark = at position 10pt with \arrow{latex}
        }
    },
    ]

    \node[corner,
    label = {above:$v_{6,1}$}] (v61) at (-2.25,1.5){} ;
    
    \node[corner,
    label = {above:$v_{5,1}$}] (v51) at (0.75,2.25){} ;
    
    \node[corner,
    label = {above:$v_{1,1}$}] (v11) at (-0.75,1.5){} ;
    
    \node[corner,
    label = {right:$v_{2,1}$}] (v21) at (1.5,0.75){} ;

    \node[corner,
    label = {below:$v_{3,1}$}] (v31) at (0.75,-1.5){} ;

    \node[corner,
    label = {below:$v_{4,1}$}] (v41) at (-1.5,-0.75){} ;

    \draw [arrowline] (v11) to (v21);
    \draw [arrowline] (v31) to (v11);
    \draw [arrowline] (v11) to (v41);
    \draw [arrowline] (v11) to (v51);
    
    \draw [arrowline] (v21) to (v41);
    \draw [arrowline] (v31) to (v41);
    \draw [arrowline] (v21) to (v31);
    
    \draw [arrowline] (v21) to (v51);
    \draw [arrowline] (v51) to (v31);
    \draw [arrowline] (v51) to (v41);

    \draw [arrowline,red] (v41) to (v61);

    \end{tikzpicture}
    \caption{An example to explain that there is a st-partition of $G_{2,6}$}\label{Fig:Last}
\end{figure}
\end{proof}

\begin{proof}[Proof of Theorem \ref{mainthm}]
    By Theorem \ref{st27}, Lemma \ref{st25} and Theorem \ref{st26}, we have $ST(2) = 6$.
\end{proof}

\section*{Acknowledgement}
This work was partially supported by the National Natural Science Foundation of China (No. 12161141006), the Natural Science Foundation of Tianjin (No. 20JCJQJC00090), and the Fundamental Research Funds for the Central Universities, Nankai University (No. 63231193).

\bibliographystyle{plain}
\bibliography{ref.bib}
\label{key}
\end{document}